\newtheorem{theorem}{Theorem}[section]
\newtheorem{lemma}{Lemma}[section]
\newtheorem{proposition}{Proposition}[section]
\newtheorem{assumption}{Assumption}[section]
\newenvironment{proof}[1][Proof]{\textbf{#1.} }{\ \rule{0.5em}{0.5em} \vspace{1ex}}
\def\trt{^{\scriptscriptstyle T}}
\title{Stochastic Approximation for Expectation Objective and Expectation Inequality-Constrained Nonconvex Optimization}
\author{Francisco Facchinei and Vyacheslav Kungurtsev}
\begin{document}
\maketitle

\section{Introduction}

In this paper we consider the constrained optimization problem,

\begin{equation}\label{eq:prob}
\begin{array}{rl}
\min_{x\in\mathbb{R}^n} & F(x),\\
\text{s.t.} & C(x)\le 0,
\end{array}
\end{equation}
where $F:\mathbb{R}^n\to \mathbb{R}$ and $C:\mathbb{R}^n\to \mathbb{R}^m$ are (generally nonconvex)
continuously differentiable. 
We assume that this is a stochastic optimization problem wherein
for all $x$, $F(x)$ and $C(x)$ are defined to be expectations of functions that depend on random
variables $\xi$ and $\zeta$, respectively, defined on a probability space $(\Omega^{f}\times\Omega^{c},\Sigma^f\times \Sigma^c,P_x)$, i.e.,
\[
F(x) = \mathbb{E}_{P_x}\left[ f(x,\xi) \right],\,\text{ and }
C(x) = \mathbb{E}_{P_x}\left[ c(x,\zeta) \right]
\]
In the sequel we discard, in the notation, the stochastic dependence on $P_x$. 
We do not assume any functional form in regards to the dependence of $f$ and $c$ on $\xi$ and
$\zeta$. We do allow for dependence between $\xi$ and $\zeta$, in general, however. Thus each noisy function
evaluation involves sampling $\sigma\in \Sigma^f\times \Sigma^c$ from the product $\sigma$-algebra
on the sample space $\Omega^{f}\times\Omega^{c}$ based on the probability measure $P_x$.

As standard for stochastic optimization, this framework is appropriate for large scale instances
of learning, where data cannot be stored entirely in memory and mini batch samples must be taken to compute
problem information used to calculate the iterate update at each iteration. For instance, a model can be trained
on some data while satisfying some maximal loss on another set of data. Alternatively, the optimization
problem can define some engineering process that is inherently stochastic and the functions represent
its operational performance on some criteria. 

We assume, as is standard, that the second moments of the uncertain problem functions are bounded,
\begin{equation}\label{eq:sgest}
\begin{array}{l}
\exists M,\,s.t.\, \forall x\in\mathbb{R}^n,\,\mathbb{E}[\|\nabla f(x,\xi)\|^2]\le M,\,\mathbb{E}[\|\nabla c(x,\zeta)\|^2]\le M,\,
\mathbb{E}[\|c(x,\zeta)\|^2]\le M, \\
\exists T,\bar M,\,s.t.,\,\forall t\le T,x\in\mathbb{R}^n,\, \mathbb{E}\left[e^{t\|\nabla f(x,\xi)\|}\right]\le \bar M,\,
\mathbb{E}\left[e^{t\|\nabla c(x,\zeta)\|}\right]\le \bar M,\,\mathbb{E}\left[e^{t\|c(x,\zeta)\|}\right]\le \bar M,
\end{array}
\end{equation}

We are interested in developing a stochastic approximation algorithm for solving~\eqref{eq:prob}.
The algorithm and convergence theory will be based upon the method and Ghost penalty framework presented in~\cite{facchinei2017ghost}.
There it was shown, among other results, that a modified sequential convex approximation algorithm with a diminishing
step-size converges asymptotically to a stationary point of the underlying nonconvex constrained
optimization problem. 

Classically, all the literature on stochastic
approximation for solving~\eqref{eq:prob} considers convex constraints for which
there is a simple projection operator, including standard texts on stochastic 
approximation~\cite{kushner2003stochastic}, or cases where the indicator of the constraint has
a proximal operation that can be easily computed in closed form~\cite{patrascu2017nonasymptotic}.
Otherwise, a classic work with functional constraints is~\cite{pflug1981convergence}
and more recently these are considered in~\cite{xiao2019penalized} which considers stochastic approximation (SA)
in the case where all the problem functions are convex, and develops and studies
a penalty algorithm for solving the problem. More recently~\cite{akhtar2020conservative, thomdapu2020stochastic}
appeared, considering tailored algorithms for~\eqref{eq:prob} when $F(x)$ and $C(x)$ are convex. 

Closer to our work,~\cite{berahas2021sequential} presents an SQP method that solves~\eqref{eq:prob} with deterministic but nonlinear, and thus nonconvex, equality constraints. The paper~\cite{Na2021} considers the more general inequality case, presenting an active set SQP method for solving such problems.
Finally, the paper~\cite{boob2022stochastic} is the only one, to the best of our knowledge, that studies stochastic constraints specifically and has theoretical guarantees. The work presents a proximal point framework that, similarly to this work, solves a series of strongly convex subproblems and with asymptotic guarantees to convergence to a KTT point under conditions of recursive feasibility. 
In this paper we intend to
advance the state of the art in considering stochastic \emph{nonconvex}
constraints, while using the framework developed in~\cite{facchinei2017ghost} in order
to construct a simple diminishing step-size method that does not require the computation
of penalty parameters. We are able to show almost sure convergence of the iterates to an appropriate stationary point, while  using a step direction with consistent zero bias finite variance (as opposed to asymptotically vanishing vriance) relative to a desired deterministic direction.  

There exists work on directly solving~\eqref{eq:prob} with Sample Average Approximation (SAA).
For instance, see~\cite{pflug2003stochastic} and~\cite{ermoliev2013sample}. The 
works~\cite{oliveira2017sample} and~\cite{oliveira2017sampleb} extend this line of work
to precise guarantees for nonconvex and convex constraints under problem assumptions of
heavier tails. SAA and SA are two separate, complementary tools for solving stochastic
optimization problems. Given the reliance of SAA on the law of large numbers and the
bias of the estimated solution, however, in practice SA has been used more often
in big data settings, witness, e.g., the popularity of stochastic gradient and its variants
for training neural networks. 

Another challenge with SAA, as pointed out in~\cite{bastin2006convergence}, is that
it is required, typically, that, in order for a sequence of solutions to converge to a
specific local or the global solution to~\eqref{eq:prob} as the number of samples grows large,
a specific corresponding local, or the global solution, to the deterministic problem
must be obtained. This is,
of course, computationally intractable for nonconvex optimization in high variable dimensions 
(and impossible to determine a priori to find a specific local solution).
In practice, we seek a stationary point or local minimizer. 

Of course, SAA is still a valuable tool for certain classes of problems,
and in this paper, we utilize SAA to solve a particular \emph{subproblem} that will act as
an unbiased estimator of the SA iteration.

\section{Algorithm and Convergence}\label{s:alg}
In this section we consider a general iterative algorithm for solving~\eqref{eq:prob}. 
Consider that for a current iterate $x^\nu$, a subproblem as originally described in~\cite{facchinei2017ghost}
is given by,
\begin{equation}\label{eq:subpfull}
\begin{array}{rl}
\min_d & \nabla F(x^\nu)^T d+\frac{\tau}{2}\|d\|^2, \\
\text{s.t.} & C(x^\nu)+\nabla C(x)^T d\le \kappa(x^\nu)e\\
& \|d\|_\infty \le \beta
\end{array}
\end{equation}
with, correspondingly,
\begin{equation}\label{eq:fullkappa}
\kappa(x^\nu) := (1-\lambda) \max_{i\in\{1,...,m\}}\{C_i(x^\nu)_+\}+\lambda \min_{d}\left\{\max_i \left\{\left(C_i(x^\nu)+\nabla C_i(x^\nu)^T d\right)_+\right\}
, \|d\|_\infty\le \rho\right\}
\end{equation}
where $0<\lambda<1$, $0<\rho<\beta$ and $a_+=\max(0,a)$. This has the effect of expanding the feasible region
of the subproblem such that it is always feasible. Define the solution to~\eqref{eq:subpfull} as $d(x^\nu)$. This is a standard successive
convex approximation subproblem with the feasible region is expanded based on ideas from~\cite{burke1989robust}.


We will also make use of the following quantity,
\begin{equation}\label{eq:theta}
\theta(x) \triangleq  \max_i \{C_i(x)_+\} - \kappa(x) = \lambda \left(\max_i \{C_i(x)_+\} - \min_d \left\{\max_i  \left\{\tilde C_i(d; x)_+ \right\} \, | \, \|d\|_\infty \le \rho\right\}\right)
\end{equation}

We shall make the following Assumption on $F(x)$, $C(x)$,
\begin{assumption}\label{as:lip}
$F(x)$ is Lipschitz continuously differentiable with constant $L_{\nabla F}$ and for all $i$, $C_i(x)$ is continuously
differentiable with constant $L_{\nabla C_i}$.
\end{assumption}

We also recall a useful result,~\cite[Proposition 5]{facchinei2017ghost}
\begin{proposition}\label{prop:dtheta}
If the eMFCQ condition holds at every $x\in\mathbb{R}^n$, there exist a positive constant $\theta$ such that for all $x,y$, it holds that,
\[
\|d(x)-d(y)\|\le \theta\|x-y\|^{1/2}
\]
\end{proposition}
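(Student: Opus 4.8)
The plan is to establish Hölder continuity of the subproblem solution map $x \mapsto d(x)$ by exploiting the strong convexity of the objective in \eqref{eq:subpfull} together with the stability of the feasible set under perturbations of $x$. The objective $d \mapsto \nabla F(x)^T d + \frac{\tau}{2}\|d\|^2$ is $\tau$-strongly convex uniformly in $x$, so the standard argument bounds $\|d(x) - d(y)\|^2$ by a constant times the sum of (i) the change in the linear term of the objective, which is controlled by $\|\nabla F(x) - \nabla F(y)\| \le L_{\nabla F}\|x-y\|$ via Assumption~\ref{as:lip}, and (ii) a term measuring how far the feasible point $d(y)$ is from being feasible for the problem at $x$ (and vice versa). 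The first contribution is Lipschitz in $\|x-y\|$; the delicate part is the second.

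**First I would** quantify the perturbation of the feasible set $\Delta(x) \triangleq \{ d : C(x) + \nabla C(x)^T d \le \kappa(x) e,\ \|d\|_\infty \le \beta \}$. The box constraint $\|d\|_\infty \le \beta$ is fixed, so only the linearized inequality constraints move. For $d(y) \in \Delta(y)$ one has $C_i(y) + \nabla C_i(y)^T d(y) \le \kappa(y)$, and one estimates
\[
C_i(x) + \nabla C_i(x)^T d(y) \le \kappa(y) + \big(C_i(x) - C_i(y)\big) + \big(\nabla C_i(x) - \nabla C_i(y)\big)^T d(y),
\]
where the last two terms are $O(\|x-y\|)$ by continuous differentiability of $C_i$ and boundedness of $d(y)$ (since $\|d(y)\|_\infty \le \beta$). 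Replacing $\kappa(y)$ by $\kappa(x)$ costs $|\kappa(x) - \kappa(y)|$, and from the definition \eqref{eq:fullkappa} one checks that $\kappa$ is itself Lipschitz (the pointwise max and the min over a compact box of Lipschitz functions are Lipschitz). Hence $d(y)$ is feasible for the $x$-problem up to an $O(\|x-y\|)$ violation of the linearized constraints.

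**The hard part will be** converting this $O(\|x-y\|)$ constraint violation into an $O(\|x-y\|^{1/2})$ bound on the distance from $d(y)$ to the exact feasible set $\Delta(x)$ — this is exactly where the $1/2$ exponent enters and why the result is only Hölder and not Lipschitz. The mechanism is that $\kappa(x)$ is defined precisely so that the subproblem is strictly feasible with a margin related to $\theta(x)$ in \eqref{eq:theta} (the slack built in by the $\lambda$-convex combination with the Burke-type relaxation), but this margin can degrade: near points where the relaxation is tight, a small outward perturbation of the constraints forces a non-smooth, square-root-type retraction back onto the feasible polyhedron. One invokes eMFCQ uniformly over $\mathbb{R}^n$ to guarantee a uniform Slater-type point for the relaxed system, then uses a Hoffman/metric-subregularity bound for the polyhedral feasible set $\Delta(x)$; the subregularity modulus, however, is not uniformly bounded unless one absorbs the vanishing margin, and doing so produces the $\sqrt{\cdot}$. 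Concretely, if $d(y)$ violates the $x$-constraints by $\varepsilon = O(\|x-y\|)$ and the available interior margin at $x$ is $\delta(x) \ge 0$, one can take a convex combination $d_t = (1-t)d(y) + t\,\hat d(x)$ with the uniform Slater point $\hat d(x)$, choosing $t \sim \sqrt{\varepsilon}$ to simultaneously kill the violation and keep the objective increase $O(t) = O(\sqrt{\varepsilon})$ — this trade-off between feasibility restoration (needs $t$ large) and objective/proximity control (needs $t$ small) is the source of the exponent.

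**Finally I would** assemble the pieces: given the Hölder-feasible comparison point $d_t$ for the $x$-problem with $\|d_t - d(y)\| = O(\|x-y\|^{1/2})$ and objective value exceeding that of $d(y)$ by $O(\|x-y\|^{1/2})$, strong convexity of the $x$-objective at its minimizer $d(x)$ gives
\[
\tfrac{\tau}{2}\|d(x) - d_t\|^2 \le \big(\text{obj}_x(d_t) - \text{obj}_x(d(x))\big) = O(\|x-y\|^{1/2}) + O(\|x-y\|),
\]
so $\|d(x) - d_t\| = O(\|x-y\|^{1/4})$ — and here I would need to be slightly more careful, re-balancing the exponent in the choice of $t$ so that the final bound comes out as $\|x-y\|^{1/2}$ rather than $\|x-y\|^{1/4}$; the correct bookkeeping is to note that $\text{obj}_x(d_t) - \text{obj}_x(d(y)) = O(t)$ and $\text{obj}_x(d(y)) - \text{obj}_x(d(x)) = O(\|x-y\|)$ separately, with the dominant term $O(t)$, then optimize $t$ against the residual feasibility requirement $t \gtrsim \varepsilon = O(\|x-y\|)$, which permits $t = O(\|x-y\|)$ and yields $\|d(x)-d(y)\| \le \|d(x)-d_t\| + \|d_t - d(y)\| = O(\|x-y\|^{1/2})$ after taking the square root of the strong-convexity inequality. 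Symmetrizing in $x$ and $y$ and taking the worst constant gives the stated $\theta$. I expect the subregularity/Slater-margin step to be the genuine obstacle; everything else is routine perturbation analysis under Assumption~\ref{as:lip}.
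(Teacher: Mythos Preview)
The paper does not prove this proposition: it is quoted verbatim as \cite[Proposition~5]{facchinei2017ghost} and used as a black box. So there is no ``paper's own proof'' to compare against beyond the citation.

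On its own merits, your outline is the right skeleton and matches the standard argument behind such results: Lipschitz perturbation of the linearized feasible set (data $C(x)$, $\nabla C(x)$, $\kappa(x)$ all Lipschitz in $x$ on bounded sets, $\|d\|_\infty\le\beta$ fixed), a uniform Hoffman/metric-regularity bound to restore feasibility at cost $O(\|x-y\|)$, and then the $\tau$-strong convexity of the objective turning an $O(\|x-y\|)$ optimal-value gap into an $O(\|x-y\|^{1/2})$ solution gap. Your final paragraph has the correct bookkeeping: with $t=O(\|x-y\|)$ (not $\sqrt{\|x-y\|}$) one gets a feasible comparison point $d_t$ with $\|d_t-d(y)\|=O(\|x-y\|)$ and objective increase $O(\|x-y\|)$, and then strong convexity yields $\|d(x)-d_t\|^2=O(\|x-y\|)$, hence the claimed $1/2$ exponent.

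The one passage that is off is the ``hard part'' paragraph, where you attribute the square root to a ``square-root-type retraction back onto the feasible polyhedron'' and first propose $t\sim\sqrt{\varepsilon}$. That misplaces the mechanism: for polyhedral constraints, Hoffman's lemma gives a \emph{Lipschitz} (not H\"older) retraction, and under the uniform eMFCQ hypothesis the Hoffman constant is uniformly bounded over the relevant set of $x$ --- no vanishing margin needs to be ``absorbed.'' The $1/2$ exponent enters only at the very last step, when the quadratic growth from strong convexity is inverted. You correct yourself by the end, but the intermediate $\|x-y\|^{1/4}$ detour should simply be deleted. With that cleanup the sketch is sound and is essentially how the cited result is established.
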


Of course, in the general case we cannot evaluate $\nabla F(x^\nu)$, etc. Instead, we perform a stochastic approximation (SA) algorithm for
iteratively solving~\eqref{eq:prob} by taking, at each iteration, an appropriate sample that forms a noisy unbiased estimate of $d(x^\nu)$.
In the context of constrained optimization, computing such an estimate is not trivial, however. In the subsequent section, we will
describe a procedure that in fact manages to compute an appropriate vector. In this section, assume that it is possible to do so,
in particular, assume that for each $k$, we compute a stochastic quantity $\tilde d(x^\nu)$ such that $\mathbb{E}[\tilde d(x^\nu)] = d(x^\nu)$.

We consider the simple algorithm defined to be the following,
\begin{enumerate}
\item Choose $x^1$, $\gamma^\nu$ satisfying,
\begin{equation}\label{eq:stepsizereq}
\sum_{\nu=1}^\infty \gamma^\nu = \infty,\,\sum_{\nu=1}^\infty \left(\gamma^\nu\right)^2 <\infty
\end{equation}
$x^1\in\mathbb{R}^n$ and $\gamma^1<1$.
\item For iteration $\nu=1,2,...$
\item Let $\tilde d(x^\nu)$ be an unbiased estimate of the solution $d(x^\nu)$ to~\eqref{eq:subpfull}
\item Set $x^{\nu+1}=x^\nu+\gamma^\nu \tilde d(x^\nu)$
\item Repeat
\end{enumerate}


\subsection{Convergence}\label{s:conv}

In this section we proceed with the proof of asymptotic convergence
with probability 1. The argument will partially resemble the proof of~\cite[Theorem 2.2]{borkar2009stochastic}.

Consider the sequences,
\begin{equation}\label{eq:stochiter}
x^{\nu+1}=x^\nu+\gamma^\nu \tilde d( x^\nu),\,\nu\ge 1,\,\tilde{x}^0=x_0 
\end{equation}
which is the stochastic process generated by the algorithm defined above.

We assume that the variance of the noise of $\tilde d(x^\nu)$ as an estimate of $d(x^\nu)$ is uniformly bounded, allowing us to write
the expression for $x^\nu$ in terms of a stochastic approximation iteration,
\[
 x^{\nu+1} =  x^\nu+ \gamma^\nu(d( x^\nu)+M_{\nu})
\]
where $M_{\nu}$ is the noise term. 

Consider the sample space $\Psi$ of all possible values of $\{M_\nu\}_{\nu=1,...,\infty}$. Defining
a cylinder $\Psi^\nu$ of possible values up until $\nu$,
we define the filtration $\sigma$-algebras $\mathcal{F}_\nu=\sigma\left(\Psi^\nu\right)$ with $\mathcal{F}_{-1}=\emptyset$ for completeness, 
satisfying, $\mathcal{F}_{-1}\subset \mathcal{F}_0\subset \mathcal{F}_1\subset ... \subset \mathcal{F}_{\nu} \subset ...$.
Under this construction, given $j\in \mathbb{Z}$, relative to $\mathcal{F}_{\nu}$, the iterate 
$x^{\nu-j}$ is deterministic for $j\le 1$ and stochastic
for $j>1$.
We let $\mathcal{F} = \bigcup\limits_{\nu\ge -1} \mathcal{F}_\nu$,
which is countable, and thus well-defined as a $\sigma$-algebra in its own right. 
Finally, the set $\mathcal{F}_\infty = \bigcap\limits_{k=-1}^\infty \bigcup\limits_{\nu\ge k} \left(\mathcal{F}_\nu\setminus \mathcal{F}_{\nu-1}\right)$ is now the set of tail-events
for the sequence $\mathcal{F}_\nu$.

We can then consider each element of the sequence of realizations $\{M_\nu\}$ as arising from a sampling of the probability
space $(\Psi,\mathcal{F}_\nu,P^M)$, i.e., each iteration is to be considered as a computation of $d(x^\nu)$ followed by a sampling $M_\nu$
from the $\sigma$-algebra $\mathcal{F}_\nu$ defined on $\Psi$ under the probability measure $P^M$. 

We make the following assumption on $\tilde d(x^\nu)$.

\begin{assumption}\label{as:dtilde}
It holds that $\tilde d(x^\nu)=d(x^\nu)+M_\nu$ satisfies,
\begin{enumerate}
\item $\mathbb{E}\left[M_\nu|\mathcal{F}_\nu\right] = 0$,
\item $\mathbb{E}\left[\|M_\nu|\mathcal{F}_\nu\|^2\right]\le \sigma^2$
\end{enumerate}
\end{assumption}
Defining an appropriate procedure to satisfy
these conditions will be the focus of the subsequent section.



We will now prove convergence using the Ghost penalty framework as in~\cite{facchinei2017ghost}. 
In order to do so, we must make an assumption on the boundedness of the iterates.
\begin{assumption}\label{as:bound}
It holds that $x^\nu$ lie in a bounded compact set almost surely
\end{assumption}
This could be considered a strong assumption, however, entire books of SA considering the unconstrained case have been written with this assumption
throughout~\cite{borkar2009stochastic} and in the unconstrained case, proving the condition has only been recently done~\cite{mertikopoulos2020almost}
and even then with a deterministic gradient bound. 

\begin{theorem}\label{th:conv}
Assume that MFCQ holds at for all $x\in\mathbb{R}^n$ as well as Assumption~\ref{as:lip}. 
Then, almost surely, any limit point $\hat x$ of $x^{\nu}$, with $\tilde d(x^\nu)$ satisfying Assumption~\ref{as:dtilde},
satisfies $d(\hat x)=0$ and $\hat x$ satisfies the KKT conditions.
\end{theorem}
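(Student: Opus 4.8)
The plan is to follow the classical ODE / supermartingale approach to stochastic approximation, but with the Lyapunov-type descent supplied by the Ghost penalty machinery of \cite{facchinei2017ghost} rather than a smooth potential. First I would fix a realization in the probability-one event on which Assumption~\ref{as:bound} holds, so that $\{x^\nu\}$ stays in a compact set $K$; on $K$, Assumption~\ref{as:lip} and MFCQ at every point give, via Proposition~\ref{prop:dtheta}, that $x\mapsto d(x)$ is (H\"older, hence) uniformly continuous and bounded, and that the natural merit function associated to the Ghost penalty — something of the form $W(x)=F(x)+\eta P(x)$ with $P$ built from $\max_i\{C_i(x)_+\}$ and $\theta(x)$ in \eqref{eq:theta} — satisfies the key descent inequality
\[
W(x^\nu+\gamma^\nu d(x^\nu)) \le W(x^\nu) - \gamma^\nu c\,\|d(x^\nu)\|^2 + o(\gamma^\nu)
\]
for a constant $c>0$, which is exactly the content used in the deterministic proof in \cite{facchinei2017ghost}. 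The point of the subproblem \eqref{eq:subpfull}–\eqref{eq:fullkappa} is that this descent direction $d(x^\nu)$ is always well-defined (the feasible set is nonempty by the Burke-type relaxation), and that $d(x)=0$ is equivalent to $x$ being a KKT point.

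Next I would inject the noise. Writing $\tilde d(x^\nu)=d(x^\nu)+M_\nu$, Assumption~\ref{as:dtilde} says $\{M_\nu\}$ is a martingale-difference sequence adapted to $\{\mathcal{F}_\nu\}$ with conditionally bounded second moment $\sigma^2$. Then along the iteration $x^{\nu+1}=x^\nu+\gamma^\nu\tilde d(x^\nu)$ I would Taylor-expand $W$ (using the Lipschitz gradient of $F$ and the smoothness/compactness of the penalty terms on $K$) to get
\[
\mathbb{E}\big[W(x^{\nu+1})\mid\mathcal{F}_\nu\big]
\le W(x^\nu) - \gamma^\nu c\,\|d(x^\nu)\|^2
 + C_1(\gamma^\nu)^2\big(\|d(x^\nu)\|^2+\sigma^2\big),
\]
the cross term $\langle \nabla W(x^\nu),\gamma^\nu M_\nu\rangle$ vanishing in conditional expectation. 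Since $\|d(x^\nu)\|$ is bounded on $K$ and $\sum(\gamma^\nu)^2<\infty$, the Robbins–Siegmund almost-supermartingale theorem applies: almost surely $W(x^\nu)$ converges and $\sum_\nu \gamma^\nu\|d(x^\nu)\|^2<\infty$. Because $\sum_\nu\gamma^\nu=\infty$, the latter forces $\liminf_\nu\|d(x^\nu)\|=0$.

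To upgrade $\liminf$ to "every limit point has $d(\hat x)=0$," I would argue by contradiction in the standard SA fashion: if some subsequence $x^{\nu_k}\to\hat x$ with $\|d(\hat x)\|=2\delta>0$, then by continuity of $d$ there is a neighborhood where $\|d\|\ge\delta$; using $x^{\nu+1}-x^\nu=\gamma^\nu d(x^\nu)+\gamma^\nu M_\nu$, the boundedness of $d$ on $K$, $\sum(\gamma^\nu)^2<\infty$, and the fact that $\sum\gamma^\nu M_\nu$ converges a.s. (martingale with summable conditional variances, by Assumption~\ref{as:dtilde}), the iterates cannot leave that neighborhood too quickly, so the trajectory spends an infinite amount of "$\gamma$-time" in the region $\{\|d\|\ge\delta\}$, contradicting $\sum_\nu\gamma^\nu\|d(x^\nu)\|^2<\infty$. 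Hence $d(x^\nu)\to 0$, so $d(\hat x)=0$ for every limit point $\hat x$; and by the exact-penalty / stationarity characterization of the Ghost subproblem from \cite{facchinei2017ghost}, together with MFCQ holding at $\hat x$, $d(\hat x)=0$ implies $\hat x$ satisfies the KKT conditions of \eqref{eq:prob}.

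The main obstacle is the middle step: making the descent-plus-noise estimate rigorous when the "Lyapunov function" $W$ is the nonsmooth Ghost penalty rather than a $C^1$ function — one must control the second-order remainder of the $\max$-type penalty terms along the step (which is where the $\|d\|_\infty\le\beta$ box constraint and the H\"older continuity from Proposition~\ref{prop:dtheta} are needed), and one must handle the subtlety that $d(x^\nu)$ depends on $x^\nu$ which is $\mathcal{F}_\nu$-measurable while $M_\nu$ is the fresh noise, so the conditioning in Assumption~\ref{as:dtilde} lines up correctly with the filtration indexing described before the theorem. I would expect to lean heavily on the corresponding deterministic lemmas of \cite{facchinei2017ghost} to package the descent estimate, and on a standard Robbins–Siegmund argument to close the stochastic part.
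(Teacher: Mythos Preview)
Your proposal is correct and follows essentially the same route as the paper: Ghost penalty $W(x;\varepsilon)=F(x)+\varepsilon^{-1}\max_i\{C_i(x)_+\}$ as Lyapunov function, a conditional-expectation descent inequality, Robbins--Siegmund to obtain $\sum_\nu\gamma^\nu\|d(x^\nu)\|^2<\infty$ and hence $\liminf\|d(x^\nu)\|=0$, then a $\delta$-band contradiction argument (the paper leans on Proposition~\ref{prop:dtheta} where you invoke a.s.\ convergence of $\sum\gamma^\nu M_\nu$, an equally valid variant) to upgrade to $\limsup=0$, and finally the Ghost-penalty characterization of $d(\hat x)=0$ under MFCQ. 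The only step you leave vague that the paper makes explicit is the choice of a \emph{uniform} penalty parameter: one needs MFCQ on the compact set from Assumption~\ref{as:bound} to bound the subproblem multipliers $\mu^\nu$ uniformly, so that $(m\|\mu^\nu\|_\infty-\tfrac{1}{\varepsilon})\,\theta(x^\nu)\le 0$ holds for a single $\tilde\varepsilon$ independent of $\nu$ --- this is precisely where the ``nonsmooth descent'' obstacle you flag gets resolved.
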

\begin{proof}
Consider the ideal step $d(x^\nu)$ which
is the solution to the subproblem given in~\eqref{eq:subpfull}. Since the MFCQ holds for all $x^\nu$, we have
by~\cite[Lemma 2]{facchinei2017ghost} that $d(x^\nu)$ is a KKT point of~\eqref{eq:subpfull}. 
Thus it holds that, for any realization in $\mathcal{F}$ and all $\nu$,
\begin{equation}\label{eq:kktnutris}
0 \in \nabla F(x^\nu) +\tau d(x^\nu)+ \nabla C(x^\nu) \mu^\nu + N_{\beta \mathbb B^n_\infty}(d(x^\nu)).
\end{equation}
Therefore, we have for some $\aleph^\nu \in N_{\beta \mathbb B^n_\infty }(d(x^\nu))$,
\[
\begin{array}{l}
\tau \|d(x^\nu)\|^2+\nabla F(x^\nu)\trt d(x^\nu) = -(\nabla C(x^\nu) \mu^\nu)\trt d(x^\nu)-(\aleph^\nu)\trt d(x^\nu)  \\ \qquad 
\le  {\mu^\nu}\trt [C(x^\nu) - \kappa(x^\nu) e] \le {\mu^\nu}\trt [\max_i\{C_i(x^\nu)_+\} - \kappa(x^\nu)] e
\end{array}
\]
where we used the constraints of~\eqref{eq:subpfull}.

Recalling the definition of $\theta(x^\nu)$~\eqref{eq:theta}, we have,
\begin{equation}\label{eq:convscfinter}
\nabla F(x^\nu)\trt d(x^\nu) \le -\tau \|d(x^\nu)\|^2 +  \theta(x^\nu) \, {\mu^\nu}\trt e.
\end{equation}

Let us now consider  the nonsmooth (ghost) penalty function
\begin{equation}\label{eq:pendef}
W(x;\varepsilon) \triangleq f(x) + \frac{1}{\varepsilon} \max_i \{g_i(x)_+\},
\end{equation}
with a positive penalty parameter $\varepsilon$. This function plays a key role in the subsequent convergence analysis although it does not appear anywhere in the algorithm itself.

In particular, we start with taking expectations with respect to $\mathcal{F}_\nu$ to define the decrease in $W(x^\nu;\varepsilon)$ conditional on the iterates
up to $x^\nu$, using the Descent Lemma and Lipschitz continuity of the problem derivatives,
\[
\begin{array}{rcl}
\mathbb{E}[W(x^{\nu + 1};\varepsilon)|\mathcal{F}_{\nu}] &-& W(x^\nu;\varepsilon)\\[5pt]
& = & \mathbb{E}[F(x^\nu + \gamma^\nu \tilde d(x^\nu))|\mathcal{F}_\nu] - F(x^\nu) \\ & &+ \frac{1}{\varepsilon} \displaystyle \Big[\mathbb{E}\left[\max_i\{C_i(x^\nu+ \gamma^\nu \tilde d(x^\nu))_+\}|\mathcal{F}_\nu\right] \displaystyle - \max_i \{C_i(x^\nu)_+\}\Big]\\[5pt]
& \le & \gamma^\nu \nabla F(x^\nu)\trt \mathbb{E}[\tilde d(x^\nu)|\mathcal{F}_\nu] + \frac{(\gamma^\nu)^2 L_{\nabla F}}{2} \mathbb{E}\left[\|\tilde d(x^\nu)\|^2|\mathcal{F}_\nu\right] \\ & & + \frac{1}{\varepsilon} \displaystyle \Big[\max_i \{(C_i(x^\nu) + \gamma^\nu \nabla C_i(x^\nu)\trt \mathbb{E}[\tilde d(x^\nu)|\mathcal{F}_\nu])_+\}\\[5pt]
& & - \displaystyle \max_i\{C_i(x^\nu)_+\} + \frac{(\gamma^\nu)^2  \max_i\{L_{\nabla C_i}\}}{2} \mathbb{E}\left[\|\tilde d(x^\nu)\|^2|\mathcal{F}_\nu\right]\Big]\\[5pt]
& \overset{(a)}{=} & \gamma^\nu \nabla F(x^\nu)\trt d(x^\nu) + \frac{(\gamma^\nu)^2 L_{\nabla f}}{2} \mathbb{E}\left[\|\tilde d(x^\nu)\|^2|\mathcal{F}_\nu\right] \\ & & + \frac{1}{\varepsilon} \displaystyle \Big[\max_i \{(C_i(x^\nu) + \gamma^\nu \nabla C_i(x^\nu)\trt d(x^\nu))_+\}\\[5pt]
& & - \displaystyle \max_i\{C_i(x^\nu)_+\} + \frac{(\gamma^\nu)^2  \max_i\{L_{\nabla C_i}\}}{2} \mathbb{E}\left[\|\tilde d(x^\nu)\|^2|\mathcal{F}_\nu\right]\Big]\\[5pt]
& \overset{(b)}{\le} & \gamma^\nu \nabla F(x^\nu)\trt d(x^\nu) + \frac{1}{\varepsilon} \displaystyle \Big[\max_i \{(1 - \gamma^\nu) C_i(x^\nu)_+ + \gamma^\nu \kappa(x^\nu)\} - \displaystyle \max_i\{C_i(x^\nu)_+\}\Big]\\[5pt]
& & + \frac{(\gamma^\nu)^2}{2} (L_{\nabla F} + \frac{ \max_i\{L_{\nabla C_i}\}}{\varepsilon}) \mathbb{E}\left[\|\tilde d(x^\nu)\|^2|\mathcal{F}_\nu\right]\\[5pt]
& \le & \gamma^\nu \nabla F(x^\nu)\trt d(x^\nu) - \frac{\gamma^\nu}{\varepsilon} \, \theta(x^\nu) + \frac{(\gamma^\nu)^2}{2} (L_{\nabla F} + \frac{ \max_i\{L_{\nabla C_i}\}}{\varepsilon}) \mathbb{E}\left[\|\tilde d(x^\nu)\|^2|\mathcal{F}_\nu\right],
\end{array}
\]
where in (a) we used that $\mathbb{E}[\tilde d(x^\nu)|\mathcal{F}_\nu] = d(x^\nu)$ and (b) uses the constraint $C(x^\nu)+\nabla C(x^\nu)\trt d(x^\nu)\le \kappa(x^\nu) e$ on $d(x^\nu)$ and we used the definition of $\theta(x^\nu)$ for the last inequality.

Furthermore, we observe that
\begin{equation}\label{eq:nonsmmaj2ter}
\begin{array}{l}
\nabla F(x^\nu)\trt d(x^\nu) - \frac{1}{\varepsilon} \, \theta(x^\nu) \le - \tau \|d(x^\nu)\|^2
+ \theta(x^\nu) \, {\mu^\nu}\trt e
 - \frac{1}{\varepsilon} \, \theta(x^\nu) \le - \tau \|d(x^\nu)\|^2
 + (m \|\mu^\nu\|_\infty - \frac{1}{\varepsilon}) \, \theta(x^\nu),
\end{array}
\end{equation}
where the first inequality is entailed by \eqref{eq:convscfinter}.

By \eqref{eq:nonsmmaj2ter}, for any fixed $x^\nu$ and for any $\eta \in (0, 1]$, there exists $\bar \varepsilon^\nu > 0$ such that
\begin{equation}\label{eq:unifsuffdescfixter}
\nabla F(x^\nu)\trt d(x^\nu) - \frac{1}{\varepsilon} \, \theta(x^\nu) \le - \eta \tau \|d(x^\nu)\|^2 \qquad \forall \varepsilon \in (0, \bar \varepsilon^\nu].
\end{equation}

Now suppose that \eqref{eq:unifsuffdescfixter} does not hold uniformly for every $x^\nu$, that is $\eta \in (0,1]$, and a subsequence $\{x^\nu\}_{\mathcal N}$ exists, where $\mathcal{N}\subseteq\{0, 1,2, \ldots\}$ such that we can construct a corresponding subsequence $\{\varepsilon^\nu\}_{\mathcal N} \in \mathbb R_+$ with $\varepsilon^\nu \downarrow 0$ on $\mathcal N$ and
\begin{equation}\label{eq:contrter}
\nabla F(x^\nu)\trt d(x^\nu) - \frac{1}{\varepsilon^\nu} \, \theta(x^\nu) > -\eta \tau \|d(x^\nu)\|^2
\end{equation}
for every $\nu \in {\mathcal N}$.
For \eqref{eq:contrter} to hold, relying on \eqref{eq:nonsmmaj2ter}, the multipliers' subsequence $\{\mu^\nu\}_{\mathcal N}$ must be unbounded. 
Now by the MFCQ assumption we reach a contradition. 
So we continue by considering that ~\eqref{eq:unifsuffdescfixter} holds uniformly for every $x^\nu$. And so,
\begin{equation}\label{eq:nonsmmaj3ter}
\begin{array}{rcl}
\mathbb{E}\left[W(x^{\nu + 1}; \tilde \varepsilon)|\mathcal{F}_\nu\right] - W(x^\nu; \tilde \varepsilon) & \le & - \gamma^\nu \eta c \|d(x^\nu)\|^2 + \frac{(\gamma^\nu)^2}{2} (L_{\nabla F} + \frac{\max_i\{L_{\nabla C_i}\}}{\tilde \varepsilon}) \mathbb{E}\left[\|\tilde d(x^\nu)\|^2|\mathcal{F}_\nu \right]\\[5pt]
& \le & -\gamma^\nu \left[\eta c -  \gamma^\nu (L_{\nabla F} + \frac{\max_i\{L_{\nabla C_i}\}}{\tilde \varepsilon})\right] \|d(x^\nu)\|^2
\\[5pt] & & +(\gamma^\nu)^2 \left(L_{\nabla F} + \frac{\max_i\{L_{\nabla C_i}\}}{\tilde \varepsilon}\right)\mathbb{E}\left[\|M_{\nu}\|^2|\mathcal{F}_\nu\right]
\end{array}
\end{equation}
where we used the triangle inequality to split
\[
\|\tilde d(x^\nu)\|^2=\|d(x^\nu)+M_{\nu}\|^2\le (\|d(x^\nu)\|+\|M_{\nu}\|)^2\le 2\|d(x^\nu)\|^2+2\|M_{\nu}\|^2
\]

Thus, for $\nu\ge\bar{\nu}$ sufficiently large, there exists $\omega$ such that,
\begin{equation}\label{eq:nonsmmaj2bister}
\mathbb{E}\left[W(x^{\nu+1}; \tilde \varepsilon)|\mathcal{F}_\nu \right] -  W(x^\nu; \tilde \varepsilon) \le - \omega \gamma^\nu \|d(x^\nu)\|^2+(\gamma^\nu)^2 C_M \sigma^2.
\end{equation}
with $C_M=(L_{\nabla f} + \frac{\max_i\{L_{\nabla g_i}\}}{\tilde \varepsilon})>0$. Now, given
that $\sum\limits_{\nu=1}^\infty (\gamma^\nu)^2 <\infty$, we can apply the Super-martingale Convergence Theorem 
(for instance, Theorem 1 in~\cite{robbins1971convergence}), from which
we can conclude that,
\begin{equation}\label{eq:summable_seriester}
\lim_{\nu}\sum_{t=\bar{\nu}}^{\nu}\gamma^{t}\|d(x^t)\|^{2}<+\infty.
\end{equation}
and that $W(x^\nu; \tilde \varepsilon)$ converges almost surely. Thus, for almost every realization it holds that $\liminf\limits_{\nu\to\infty}\|d(x^\nu)\|=0$. As $\liminf\limits_{\nu\to\infty}\|d(x^\nu)\|=0$ is an event in $\mathcal{F}_\infty$, we can
denote the set $\mathcal{D}\subseteq\mathcal{F}_\infty$ as the probability one set for which this is the case.

Recalling the definition of $\theta$~\eqref{eq:theta}, assume that we are in $\mathcal{D}$ and consider any sequence realized by
$\mathcal{F}$ whose tail events lie in $\mathcal{D}$. 
Taking the limit on a subsequence ${\mathcal N}$ such that $\|d(x^\nu)\| \underset{\mathcal N}{\to} 0$, we have $\|\nabla C(x^\nu)\|_\infty \|d(x^\nu)\| \underset{\mathcal N}{\to} 0$ and so $\theta(x^\nu) \underset{\mathcal N}{\to} 0.$
Now let $\hat x$ be a cluster point of subsequence $\{x^\nu\}_{\mathcal N}$. Since $\theta(x^\nu) \underset{\mathcal N}{\to} 0$ implies $\kappa (\hat x) = \max_i \{g_i(\hat x)_+\}$, by the MFCQ,~\cite[Lemma 2]{facchinei2017ghost} implies that $\hat x$ is a KKT point for~\eqref{eq:prob}.
Specifically, taking the limit in \eqref{eq:kktnutris}, we obtain by the KKT multipliers' boundedness and outer semicontinuity property of the normal cone mapping $N_{\beta \mathbb B^n_\infty}(\bullet)$,
$$
- \nabla F(\hat x) - \nabla C(\hat x) \hat \xi \in N_{\beta \mathbb B^n_\infty }(0) = \{0\},
$$
with $\hat \xi \in N_{\mathbb R^m_-}(C(\hat x) - \kappa(\hat x) e) = N_{\mathbb R^m_-}(C(\hat x))$ and where the first equality follows from~\cite[Lemma 1]{facchinei2017ghost}. In turn, $\hat x$ is a KKT point for problem \eqref{eq:prob}.


Consider the set of tail events $\mathcal{D}^0\subseteq \mathcal{D}\subseteq \mathcal{F}_\infty$ for which 
it holds that $\limsup_{\nu\rightarrow\infty} \|d(x^\nu)\|>0$. Then, for any sequence $\{x^\nu\}$
determined by $\bar{\mathcal{F}}\subseteq\mathcal{F}$ whose tail events lie in $\mathcal{D}^0$, there exists
$\delta>0$ such that $
 \|d(x^\nu)\|>\delta$ and $ \|d(x^\nu)\|<\delta/2
$
for infinitely many $\nu$s. Therefore, there is an infinite
subset of indices ${\cal N}$ such that, for each $\nu\in{\cal N}$,
and some $i_{\nu}>\nu$, the following relations hold:
\begin{equation}\label{eq:con1}
 \|d(x^\nu)\|<\delta/2,\hspace{6pt}\|d({x}^{i_{\nu}})\|>\delta
\end{equation}
and, if $i_\nu > \nu + 1$,
\begin{equation}
\delta/2\le\|d({x}^{j})\|\le\delta,\hspace{6pt}\nu<j<i_{\nu}.\label{eq:con2}
\end{equation}
Hence, for all $\nu\in{\cal N}$, we can write
\begin{equation}\label{eq:ineqser}
\begin{array}{rcl}
\delta/2 & < &  \|d({x}^{i_{\nu}})\|-\|d({x}^{\nu})\|
 \, \leq \, \|d({x}^{i_{\nu}}) - d({x}^{\nu})\| \overset{(a)}{\le} \theta \|{x}^{i_{\nu}}-{x}^{\nu}\|^{1/2}\\[0.5em]
 & \overset{(b)}{\le} & \theta \left[\sum_{t=\nu}^{i_{\nu}-1}\gamma^{t} (\|d(x^t)\|+\|M_t\|)\right]^{1/2} \overset{(c)}{\le} \theta \delta^{(1/2)}\left(\sum_{t=\nu}^{i_{\nu}-1}\gamma^{t}\right)^{1/2}+\theta\left(\sum_{t=\nu}^{i_{\nu}-1}\gamma^{t}\|M_t\|\right)^{1/2},
\end{array}
\end{equation}
where (a) is due to Proposition~\ref{prop:dtheta}, (b) comes from the triangle inequality and the updating rule of the algorithm and in (c) we used
\eqref{eq:con2}. By \eqref{eq:ineqser} we have
\begin{equation}\label{eq:absu}
\underset{_{\nu\rightarrow\infty}}\liminf \;\; \theta (\delta)^{1/2}\left(\sum_{t=\nu}^{i_{\nu}-1}\gamma^{t}\right)^{1/2}+\theta\left(\sum_{t=\nu}^{i_{\nu}-1}\gamma^{t}\|M_t\|\right)^{1/2}>\delta/2.
\end{equation}
We prove next that \eqref{eq:absu} is in contradiction with the convergence of $\{W(x^\nu;\tilde \varepsilon)\}$
for any $\tilde \varepsilon \in (0, \bar \varepsilon]$. To this end, we first
show that $\|d({x}^{\nu})\|\ge\delta/4$, for sufficiently large $\nu\in{\cal N}$. Reasoning as in \eqref{eq:ineqser}, we have
\begin{equation*}
\|d({x}^{\nu+1})\| - \|d({x}^{\nu})\| \le \theta \|
{x}^{\nu+1}-{x}^{\nu}\|^{1/2} \le \theta (\gamma^\nu)^{1/2} (\|d({x}^{\nu})\|+\|M_\nu\|)^{1/2},
\label{eq:ineqser2}
\end{equation*}
for any given $\nu$. By Assumption~\ref{as:dtilde} part 2, for any $M$ however large, there exists a $0<\alpha(M)<1$ 
sufficiently small such that $\mathbb{P}\left[M^\nu \ge M|\mathcal{F}_{k-1}\right]\le \alpha(M)$.
Thus by construction of $\nu$, since $\|d(x^{\nu+1})\|\ge \delta/2$, if $\|d(x^\nu)\|\le \delta/4$, we have, 
assuming $M^\nu\le M$,
\[
\delta/4 \le \theta (\gamma^\nu)^{1/2} (\delta/4+\|M_\nu\|)^{1/2} \le \theta (\gamma^\nu)^{1/2} (\delta/4+M)^{1/2}
\]
which is impossible for $\nu\ge \bar{\nu}$ for $\bar{\nu}$ large enough. Thus $M^\nu > M$ for such $\nu\ge\bar{\nu}$. However,
given that $\alpha(M)>0$ and arbitrary, it must hold with probability one that $M^\nu \le M$ occurs infinitely often, reaching
a contradiction. Thus it holds that for some $\bar{\nu}$, 
$\|d(x^\nu)\| \ge \delta/4$ for $\nu\ge \bar{\nu}$ and $\nu\in\mathcal{N}$. Now~\eqref{eq:absu} implies that there exists
$\bar{\theta}$ and $\tilde\nu$ such that,
\[
\sum_{t=\nu}^{i_{\nu}-1}\gamma^{t} \ge \bar{\theta}.
\]
for $\nu\ge \tilde \nu$. However,
\[
\lim\limits_{\nu\to\infty}\sum\limits_{t=1}^\nu \gamma^t \|d(x^t)\|^2 \ge \sum\limits_{\nu\in\mathcal{N}}\sum\limits_{t=\nu}^{i_{\nu}-1} \gamma^t \|d(x^t)\|^2
\ge \sum\limits_{\nu\in\mathcal{N},\, \nu\ge\max\{\bar{\nu},\tilde{\nu}\}} \left[\bar{\theta} \delta^2/16\right] \to \infty
\]
which contradicts~\eqref{eq:summable_seriester}. Thus it is impossible for a sequence in $\bar{\mathcal{F}}$ to have 
$\limsup_{\nu\to\infty} \|d(x^\nu)\|\ge \delta$ for any $\delta$ and $\lim \|d(x^\nu)\|\to 0$ with probability one.
\end{proof}


\section{Computing an Unbiased Estimate of the Desired Step}
In this Section we present a method of computing the unbiased estimate $\tilde d(x^\nu)$ as needed
for the convergence theory above. 
One could consider naively computing a one-sample stochastic estimate of~\eqref{eq:subpfull}, i.e., by, at each iteration
$\nu$ taking a single sample $(\xi^\nu,\zeta^\nu)$ and solving
\begin{equation}\label{eq:subp1}
\begin{array}{rl}
\min_d & \nabla f(x^\nu,\xi^\nu)^T d+\frac{\tau}{2}\|d\|^2, \\
\text{s.t.} & c(x^\nu,\zeta^\nu)+\nabla c(x,\zeta^\nu)^T d\le \kappa(x,\xi^\nu,\zeta^\nu)e\\
& \|d\|_\infty \le \beta 
\end{array}
\end{equation}
to obtain $d(x^\nu;(\xi^\nu,\zeta^\nu))$ as an estimate for $d(x^\nu)$, with $\kappa(x,\xi^\nu,\zeta^\nu)$ as in~\eqref{eq:fullkappa} but
with $C_i(x^\nu)$ replaced by $\nabla c_i(x^\nu,\zeta^\nu)$,
however in general this will not be an unbiased estimate of $d(x^\nu)$. This is because $d(x^\nu)$ is a complicated nonlinear function
of the noise of the component parts of~\eqref{eq:subp1}, whereas in standard SA, the noise can be modeled as being additive and unbiased with respect to the gradient.
In particular, we expect that generally $\mathbb{E}\left[d(x^\nu;(\xi^\nu,\zeta^\nu))\right]\neq d(x^k)$. 

Instead we consider a particular Monte Carlo technique for estimating the solution of the deterministic subproblem defining 
$d(x^\nu)$ which has been shown to define an unbiased estimate of this solution.

Specifically, we use the work of~\cite{blanchet2019unbiased}, based originally on~\cite{blanchet2015unbiased}, 
which presents an unbiased Monte Carlo method for estimating
$\alpha = r(\mathbb{E}(X))$, for any function $r$ and random variable $X$. In this case $r(\cdot)=d(x^\nu)$, with
$X_\nu=(\nabla f(x^\nu,\xi^\nu),c(x^\nu,\zeta^\nu),\nabla c(x,\zeta^\nu))$ the set of random variables. 
Thus we generically define $d(x^\nu,X_\nu)$ as the solution to~\eqref{eq:subp1} for a sampled $X_\nu$, and denote
multiple samples $j\in\{1,...,J\}$ by $X_{\nu,j}$ and $d(x^\nu,\{X_{\nu,j}\})$ as the solution to the SAA subproblem,
\begin{equation}\label{eq:subp1b}
\begin{array}{rl}
\min_d & \frac{1}{J}\sum\limits_{j=1}^J \nabla f(x^\nu,\xi^{\nu,j})^T d+\frac{\tau}{2}\|d\|^2, \\
\text{s.t.} & \frac{1}{J}\sum\limits_{j=1}^J c(x^\nu,\zeta^{\nu,j})+\frac{1}{J}\sum\limits_{j=1}^J\nabla c(x^\nu,\zeta^{\nu,j})^T d\le \kappa\left(x^\nu,\{\xi^{\nu,j}\},\{\zeta^{\nu,j}\}\right)e\\
& \|d\|_\infty \le \beta 
\end{array}
\end{equation}
where now
\[
\begin{array}{l}
\kappa\left(x^\nu,\{\xi^{\nu,j}\},\{\zeta^{\nu,j}\}\right):=(1-\lambda) \max_{i\in\{1,...,m\}}\left\{\frac{1}{J} \sum_{j=1}^{J} c_i(x^\nu,\zeta^{\nu,j})_+\right\}
\\ \qquad\qquad+
\lambda \min_{d}\left\{\max_i \left\{\left(\frac{1}{J}\sum_{j=1}^{J} c_i(x^\nu,\zeta^{\nu,j})+\frac{1}{J}\sum_{j=1}^{J}\nabla c_i(x^\nu,\zeta^{\nu,j})^T d\right)_+\right\}, 
\|d\|_\infty\le \rho\right\}
\end{array}
\]

The estimate $\tilde d(x^\nu)$ we compute is defined as follows,
\begin{equation}\label{eq:estd}
\begin{array}{l}
\tilde d(x^\nu) = \frac{\Delta_N}{p(N)}+d(x^\nu;X_\nu), \text{ where,} \\
\Delta_N = d(x^\nu;\frac{S(2^N+1)}{2^{N+1}})-\frac{1}{2}\left\{ d(x^k;\frac{S_O(2^N)}{2^N})+d(x^k;\frac{S_E(2^N)}{2^N})\right\} \\
\text{where }S(n)\text{ denotes }n\text{ samples of }X_\nu\text{ and } \\
S_E(n),S_O(n)\text{ denote even and odd numbers of samples, i.e., } S_E(n) \text{ denotes }2n \text{ samples of }X_\nu\\
p(n) = P(N=n)\text{ and }N \text{ is a geometric random variable}
\end{array}
\end{equation}
Thus to perform an estimate of $d(x^\nu)$, one samples $N$ from a geometric distribution with some parameter $p_\alpha$, then creates a set of $2^{N+2}+1$
samples as required to calculate the quantities $\Delta_N$ and the four subproblem computations $d(x^\nu,\{X_{\xi,\zeta}\})$ used to
obtain $\tilde d(x^\nu)$. 

In the rest of this section, we show that this $\tilde d(x^\nu)$ satisfies the properties needed for convergence in
Section~\ref{s:conv}, in particular Assumption~\ref{as:dtilde} on the unbiasedness and uniformly
bounded variance of the estimate.

\subsection{Deterministic Constraints Case}

In the case where the constraints are entirely deterministic, i.e., $C(x)=c(x)$, then in~\cite[Theorem 3]{blanchet2019unbiased}, under
a strong but standard collection of assumptions, it was shown that the estimate $\tilde d(x^\nu)$ defined above is an unbiased
estimate for $d(x^\nu)$ with bounded variance. In this section we review the results and note the required assumptions for our context
in order to ensure a \emph{uniform} bounded variance across $\nu$. In the subsequent subsection we proceed to
consider the more general scenario.

We now state the assumptions necessary for the unbiased estimation to hold in this case. Denote the feasible region at iteration $\nu$,
\[
F_\nu = \left\{d:\, C(x^\nu)+\nabla C(x)^T d\le \kappa(x^\nu)e,\,  \|d\|_\infty \le \beta\right\}
\]
We state the assumptions as presented in~\cite[Assumption 2]{blanchet2019unbiased},
discarding those that automatically hold for the structure of~\eqref{eq:subp1b}. We are left with the following necessary
conditions,
\begin{assumption}\label{as:detconsest}
It holds that,
\begin{enumerate}
\item There exist $\delta_0>0$ and $\sigma^2_F>0$ such that for $|t|\le \delta_0$ and for all $x^\nu$,
\[
\sup_{d\in F_\nu} \mathbb{E}_\xi\left[e^{t (\nabla f(x^\nu,\xi)^T d-\nabla F(x^\nu)^T d)}\right] \le e^{\sigma_F^2 t^2/2}
\]
\item The linear independence constraint qualification holds for~\eqref{eq:subpfull} at $d(x^\nu)$ for all $\nu$.
\item Strict complementarity holds for~\eqref{eq:subpfull} at $d(x^\nu)$ for all $\nu$.
\end{enumerate}
\end{assumption}
from which we can deduce,
\begin{theorem}~\cite[Theorem 3]{blanchet2019unbiased}
The solution of problem~\eqref{eq:subp1b} with a deterministic constraint (i.e., $c(x^\nu,\zeta)=C(x^\nu)$ and $\nabla c(x^\nu,\zeta)=\nabla C(x^\nu)$
w.p.1)
under Assumptions~\ref{as:detconsest} satisfies, for some $\sigma^2$,
\[
\mathbb{E}[\tilde d(x^\nu)]=d(x^\nu),\, Var(\tilde d(x^\nu))<\sigma^2
\]
for all $\nu$, and the computational complexity for computing $\tilde d(x^\nu)$ is bounded in expectation.
\end{theorem}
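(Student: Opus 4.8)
The plan is to reduce the statement to \cite[Theorem 3]{blanchet2019unbiased} by checking that the specialized SAA subproblem \eqref{eq:subp1b} meets the hypotheses of that theorem, and then to track all constants so as to obtain uniformity in $\nu$. The construction in \cite{blanchet2019unbiased} yields an unbiased, finite-variance, finite-expected-work estimator of $r(\mathbb{E}[X])$ of exactly the form \eqref{eq:estd} whenever (i) $r$ is smooth (continuously differentiable with locally Lipschitz gradient, so that the randomized-multilevel telescoping identity $r(\mathbb{E}[X]) = \mathbb{E}[d(x^\nu;X_\nu)] + \sum_n \mathbb{E}[\Delta_n]$ and the needed second-order bias/variance estimates hold) in a neighborhood of $\mathbb{E}[X]$, (ii) $X$ satisfies a sub-exponential moment bound on that neighborhood, and (iii) the geometric parameter $p_\alpha$ of $N$ is chosen compatibly with the resulting smoothness and tail constants. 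Here $r(\cdot) = d(x^\nu;\cdot)$ is the solution map of \eqref{eq:subp1b} evaluated at the empirical mean of $X_\nu = (\nabla f(x^\nu,\xi),c(x^\nu,\zeta),\nabla c(x^\nu,\zeta))$, whose population mean is $(\nabla F(x^\nu),C(x^\nu),\nabla C(x^\nu))$, at which point $r$ returns exactly $d(x^\nu)$.

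First I would establish smoothness of $r$ near the mean. Problem \eqref{eq:subpfull} is strongly convex in $d$ with fixed modulus $\tau$, so its solution is unique and its KKT system is well posed; under Assumption~\ref{as:detconsest}(2)--(3) (LICQ and strict complementarity at $d(x^\nu)$) the active set is locally stable and the KKT Jacobian is nonsingular, so the implicit function theorem gives that $d(x^\nu;\cdot)$ is continuously differentiable (indeed as smooth as the data dependence permits) on a neighborhood of $(\nabla F(x^\nu),C(x^\nu),\nabla C(x^\nu))$. The only subtlety is the right-hand side $\kappa(x^\nu;\cdot)$ of \eqref{eq:fullkappa}, which is itself the optimal value of an inner $\min_d\max_i$ problem; since that inner problem is convex and, by a Danskin-type argument for its optimal value together with the same stability reasoning, depends smoothly on the data on the relevant neighborhood, $\kappa$ contributes only a smooth perturbation of the constraint right-hand side, and the composition remains $C^1$ with locally Lipschitz derivative.

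Next I would verify the tail condition. Assumption~\ref{as:detconsest}(1) is precisely the sub-Gaussian bound on $\nabla f(x^\nu,\xi)^T d$ uniformly over $d\in F_\nu$, and \eqref{eq:sgest} supplies uniform exponential-moment bounds on $\|\nabla c(x^\nu,\zeta)\|$ and $\|c(x^\nu,\zeta)\|$; together these give the sub-exponential control of $X_\nu$ on a ball about its mean required by \cite[Theorem 3]{blanchet2019unbiased}. Applying that theorem for fixed $\nu$ then gives $\mathbb{E}[\tilde d(x^\nu)] = d(x^\nu)$, $\mathrm{Var}(\tilde d(x^\nu)) < \infty$, and finite expected computational work.

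Finally, for the uniformity in $\nu$ I would track the constants. The modulus $\tau$, the feasible-set radius $\beta$, and the inner radius $\rho$ are fixed; by Assumption~\ref{as:bound} the iterates lie almost surely in a compact set $K$, and since the assumptions are posed for all $x$ (hence for all $x^\nu\in K$), continuity and compactness of $K$ yield a $\nu$-independent lower bound on the smallest singular value of the active constraint Jacobian and $\nu$-independent lower bounds on the active multipliers and on the inactive constraint slacks. These produce a uniform Lipschitz modulus for $r$ on a ball of $\nu$-independent radius about the means, while $\delta_0,\sigma_F^2,M,\bar M,T$ are uniform by hypothesis. Feeding these uniform smoothness and tail constants into the estimates in the proof of \cite[Theorem 3]{blanchet2019unbiased} — in particular choosing $p_\alpha$ once as a function of them — yields a single $\sigma^2$ bounding $\mathrm{Var}(\tilde d(x^\nu))$ and a single bound on the expected work, for all $\nu$. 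The main obstacle I anticipate is exactly this uniformity step: converting the pointwise qualifications (LICQ, strict complementarity, sub-Gaussianity) into $\nu$-independent quantitative constants on a $\nu$-independent neighborhood of validity, and handling $\kappa(x^\nu;\cdot)$ — whose defining inner problem is nonsmooth in $d$ — carefully enough that its optimal value inherits the required smooth, uniformly Lipschitz dependence on the sampled data.
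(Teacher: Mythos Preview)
Your overall strategy---reduce to \cite[Theorem~3]{blanchet2019unbiased} by verifying its hypotheses, then track constants for uniformity in $\nu$---is exactly the paper's approach. However, you overcomplicate the deterministic-constraint case in a way that obscures how short the argument actually is.

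In this theorem the constraints are deterministic: $c(x^\nu,\zeta)=C(x^\nu)$ and $\nabla c(x^\nu,\zeta)=\nabla C(x^\nu)$ almost surely. Consequently the feasible set $F_\nu$ and the quantity $\kappa(x^\nu)$ do \emph{not} depend on the samples at all; your entire discussion of $\kappa$'s smoothness in the sampled data via Danskin-type arguments, and of tail bounds on $c(x^\nu,\zeta)$ and $\nabla c(x^\nu,\zeta)$, is vacuous here. The only randomness enters through $\nabla f(x^\nu,\xi)$ in the objective. With LICQ and strict complementarity already assumed (Assumption~\ref{as:detconsest}(2)--(3)) and the sub-Gaussian bound on the objective already assumed (Assumption~\ref{as:detconsest}(1)), the paper simply identifies the two remaining items from \cite[Assumption~2]{blanchet2019unbiased} that are not yet covered: (i) a H\"older/Lipschitz bound on the sample objective with modulus $\iota(\xi)$ having finite moment generating function near the origin, and (ii) a uniform local MGF bound on $\|\nabla f(x^\nu,\xi)+\tau d\|$. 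Both are dispatched in one line each using \eqref{eq:sgest} and Assumption~\ref{as:bound}, with $\iota(\xi)=\sup_x\|\nabla f(x,\xi)\|+\delta$.

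So your proposal is not wrong, but it spends most of its effort on issues that evaporate under the deterministic-constraint hypothesis, and it does not single out the two concrete conditions that actually need checking. The uniformity-in-$\nu$ concern you flag is real and is handled in the paper exactly as you suggest (the constants in \eqref{eq:sgest} and the compactness from Assumption~\ref{as:bound} are already $\nu$-independent), but it requires no implicit-function or singular-value tracking here---just noting that $\iota$ and the MGF bound can be chosen independently of $x^\nu$.
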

\begin{proof}
Along with the above assumptions we need to verify the condition:
\begin{itemize}
\item There exists a locally bounded measurable function $\iota:\Omega\to \mathbb{R}^+$ and $\gamma,\delta>0$ such that
\[
\left|\nabla f(x^\nu,\xi)^T d+\frac{\tau}{2}\|d\|^2-\nabla f(x^\nu,\xi)^T d'-\frac{\tau}{2}\|d'\|^2\right|
\le \iota(\xi)\|d-d'\|^\gamma
\]
for all feasible $d$ and $d'$ with $\|d-d'\|\le \delta$, and $\iota(\xi)$ has a finite moment generating function in a neighborhood
of the origin. Note that in the original, only a single optimization problem was under consideration, whereas here, since we need
a uniformly bounded variance SA error term, $\iota(\xi)$ should be independent of $x^\nu$.
\end{itemize}
Indeed, let $\gamma=1$ and $\iota(\xi)=\left| \sup\limits_{x\in\mathbb{R}^n}\|\nabla f(x,\xi)\|+\delta\right|$. Given~\eqref{eq:sgest},
we have that the moment generation function $\mathbb{E}[e^{t\iota(\xi)}]$ is finite. 

Next we must also show the condition,
\begin{itemize}
\item There exists $\delta'_0$, $t>0$ and $\bar{M}_{f}$ such that,
\[
\sup_{\|d-d(x^\nu)\|\le \delta'_0} \mathbb{E}\left[e^{t\|\nabla_x f(x^\nu,\xi)+\tau d\|}\right] \le \bar{M}_F
\]
for all $\nu$.
\end{itemize}
Again, this follows from~\eqref{eq:sgest} and the boundedness of $\{x^\nu\}$.
\end{proof}

\subsection{General Case}
In this section, we prove the equivalent of~\cite[Theorem 3]{blanchet2019unbiased} for the case of stochastic constraints, i.e.,
the original problem with $C(x)=\mathbb{E}\left[c(x,\zeta)\right]$. We take the base point for the subproblems, 
$x$ as given in this section and for ease of exposition
drop the dependence on the iteration $\nu$. Thus all expectations will be implicitly with respect to $(\xi^\nu,\zeta^\nu)$
conditional on the $\sigma$-algebra $\mathcal{F}_\nu$. However, we will take care that all constants and bounds shall be defined to be uniform
across the iterations $\nu$ and thus hold across all $x^\nu$, noting that we shall make use of Assumption~\ref{as:bound} in order to
do so.

Consider a set of stochastic realizations $\{\xi_i,\zeta_i\}_{i=1,...,2^N}$, and now the following subproblem solutions,
\begin{enumerate}
\item $\kappa_*$ as the quantity defined in~\eqref{eq:fullkappa}.
\item $\kappa_{N}$ as,
\[
\begin{array}{l}
\kappa^N:=(1-\lambda) \max_{i\in\{1,...,m\}}\left\{\frac{1}{2^N} \sum_{j=1}^{2^N} c_i(x,\zeta_j)_+\right\}\\ \qquad\qquad +
\lambda \min_{d}\left\{\max_i \left\{\left(\frac{1}{2^N}\sum_{j=1}^{2^N} c_i(x,\zeta_j)+\frac{1}{2^N}\sum_{j=1}^{2^N}\nabla c_i(x,\zeta_j)^T d\right)_+\right\}, 
\|d\|_\infty\le \rho\right\}
\end{array}
\]
\item $(d_*,\mu_*)$ as the solution to~\eqref{eq:subpfull}, i.e. $d(x^k)$ with the corresponding multiplier vector. 
\item $(d_*^N,\mu_*^N)$ as the solution to, if it exists,
\[
\begin{array}{rl}
\min_d & \nabla F(x)^T d+\frac{\tau}{2}\|d\|^2, \\
\text{s.t.} & C(x)+\nabla C(x)^T d\le \kappa^N e\\
&\|d\|_\infty \le \beta 
\end{array}
\]
\item $(d^+_\epsilon,\mu^+_\epsilon)$ as the solution to, if it exists,
\[
\begin{array}{rl}
\min_d & \nabla F(x)^T d+\frac{\tau}{2}\|d\|^2, \\
\text{s.t.} & C(x)+\nabla C(x)^T d\le \left(\kappa^N+\epsilon\right) e\\
& \|d\|_\infty \le \beta 
\end{array}
\]
with feasible set $\mathcal{F}^+_\epsilon$.
\item $(d^-_\epsilon,\mu^-_\epsilon)$ as the solution to, if it exists, 
\[
\begin{array}{rl}
\min_d & \nabla F(x)^T d+\frac{\tau}{2}\|d\|^2, \\
\text{s.t.} & C(x)+\nabla C(x)^T d\le \left(\kappa^N-\epsilon\right) e\\
& \|d\|_\infty \le \beta 
\end{array}
\]
with feasible set $\mathcal{F}^-_\epsilon$.
\item $(d^c_N,\mu^c_N)$ the solution to,
\[
\begin{array}{rl}
\min_d & \nabla F(x)^T d+\frac{\tau}{2}\|d\|^2, \\
\text{s.t.} & \frac{1}{2^N}\sum_{j=1}^{2^N} c(x,\zeta_j)+\frac{1}{2^N}\sum_{j=1}^{2^N} \nabla c(x,\zeta_j)^T d\le \kappa^N e\\
& \|d\|_\infty \le \beta 
\end{array}
\]
with feasible set $\mathcal{F}^c$ (which is always nonempty).
\item $(d_{N},\mu_N)$ the solution to,
\[
\begin{array}{rl}
\min_d & \frac{1}{2^N}\sum_{j=1}^{2^N}\nabla f(x,\zeta_j)^T d+\frac{\tau}{2}\|d\|^2, \\
\text{s.t.} & \frac{1}{2^N}\sum_{j=1}^{2^N} c(x,\zeta_j)+\frac{1}{2^N}\sum_{j=1}^{2^N} \nabla c(x,\zeta_j)^T d\le\kappa^N e\\
& \|d\|_\infty \le \beta 
\end{array}
\]
with feasible set $\mathcal{F}^N$ (which is always nonempty).
\end{enumerate}

We now state the assumptions needed for the main results in this section. To this end we define a generic stochastically dependent feasible set,
\[
F(\{\zeta_i\};x) = \left\{d:\, \sum_i c(x,\zeta)+\sum_i \nabla c(x,\zeta)^T d\le \kappa(x,\{\zeta_i\})e,\, \|d\|_\infty \le \beta, \right\}
\]
For this we adapt the conditions in Assumption~\ref{as:detconsest} to apply across all realizations of the feasible
constraint region. Furthermore we add some assumptions as introduced for the study of SAA
of expectation constraints in~\cite{wang2008sample}. We drop the dependence on the iteration index $k$.

\begin{assumption}\label{as:fullconsa}
It holds that,
\begin{enumerate}
\item There exist $\delta_0>0$ and $\sigma^2>0$ such that for $|t|\le \delta_0$, for a.e. $\{\zeta_i\}_{i\in\mathbb{N}}$,
\[
\sup_{d\in F(\{\zeta_i\};x)} \mathbb{E}_\xi\left[e^{t (\nabla f(x,\xi)^T d-\nabla F(x)^T d)}\right] \le e^{\sigma^2 t^2/2}
\]
\item The linear independence constraint qualification holds for~\eqref{eq:subp1b} at $d(x,\{X_{\xi,\zeta}\})$ as well as $d_*^N$ and $d^c_N$ and their associated
optimization problems defined above, for almost every $\{X_{\xi,\zeta}\}$.
\item Strict complementarity holds for~\eqref{eq:subp1b} at $d(x,\{X_{\xi,\zeta}\})$ for almost every $\{X_{\xi,\zeta}\}$.
\item For all $d\in F(\{\zeta_i\};x)$ it holds that the moment generating function $M_d(\cdot)$ of
$\nabla c(x,\zeta)^T d-\nabla C(x)^T d$ is bounded by some $\bar{M}_c$ (independent of $x$) in a neighborhood of zero.
\item It holds that for $\psi(d;x) = \max\limits_i \left\{(C_i(x)+\nabla C_i(x)^T d)_+\right\}$
and
\[
\psi^N(d;x) = \max\limits_i \left\{\left(\sum\limits_{j=1}^{2^N} c_i(x,\zeta_j)+\sum\limits_{j=1}^{2^N}\nabla c_i(x,\zeta_j)^T d\right)_+\right\},
\]
for all $x$ and $d$,
\[
M^\kappa_{d,x}(t):= \lim\limits_{N\to\infty} \mathbb{E}\left[e^{t[\psi^N(d;x)-\psi(d;x)]}\right]
\]
exists as an extended real number and $M^\kappa_{d,x}(t)<\infty$ for $t$ sufficiently close to zero.

\item It holds that there exists $\delta$ such that 
for all $x$, 
\[
\mathbb{P}\left[\frac{1}{2^N}\sum\limits_{j=1}^{2^N}\max_{i,x\in\mathbb{R}^n}\left\|\nabla c_i(x,\zeta_j)\right\| \ge \bar{\phi} \right]
\le e^{-2^N\delta}
\]
where $\bar{\phi}\ge \mathbb{E}[\max_{i,x\in\mathbb{R}^n}\left\|\nabla c_i(x,\zeta_j)\right\|]$. 
\end{enumerate}
\end{assumption}

\begin{lemma}\label{lem:intgradd}
It holds that,
\begin{itemize}
\item For any $\zeta$ there exists an integrable function $\phi$ such that,
\[
\left|\nabla c(x,\zeta)^T d-\nabla c(x,\zeta)^T d'\right|\le \phi(\zeta)\|d-d'\|
\]
Denote $\Phi:=\mathbb{E}[\phi(\zeta)]$.
\item The moment generating function $M_\phi(\cdot)$ of $\phi(\zeta)$ is finite in a neighborhood of zero.
\end{itemize}
\end{lemma}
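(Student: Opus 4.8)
The plan is to establish the two bullet points of Lemma~\ref{lem:intgradd} essentially as direct consequences of the moment assumptions already collected in \eqref{eq:sgest} and Assumption~\ref{as:fullconsa}, exploiting that the feasible directions $d$ live in a compact box $\|d\|_\infty\le\beta$. First I would treat the Lipschitz bound. For fixed $\zeta$, the map $d\mapsto \nabla c(x,\zeta)\trt d$ is linear in $d$, so by Cauchy--Schwarz $|\nabla c(x,\zeta)\trt d-\nabla c(x,\zeta)\trt d'|\le \|\nabla c(x,\zeta)\|\,\|d-d'\|$, uniformly in $x$. This suggests the natural choice $\phi(\zeta)=\sup_{x\in\mathbb{R}^n}\|\nabla c(x,\zeta)\|$ (or, to be safe about measurability, the component-wise supremum $\max_i\sup_x\|\nabla c_i(x,\zeta)\|$ that already appears in Assumption~\ref{as:fullconsa}, part~6). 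It remains only to argue $\phi$ is integrable: this follows from Jensen's inequality together with the exponential-moment bound $\mathbb{E}[e^{t\|\nabla c(x,\zeta)\|}]\le\bar M$ in \eqref{eq:sgest}, since a finite exponential moment forces all polynomial moments to be finite, in particular $\mathbb{E}[\|\nabla c(x,\zeta)\|]<\infty$; one should note that \eqref{eq:sgest} is stated pointwise in $x$, so to get integrability of the supremum over $x$ I would instead invoke the uniform bound built into Assumption~\ref{as:fullconsa}, part~6, which controls exactly $\mathbb{E}[\max_{i,x}\|\nabla c_i(x,\zeta)\|]$ via $\bar\phi$.

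For the second bullet, the moment generating function of $\phi(\zeta)$ near zero, I would again lean on \eqref{eq:sgest}: with $\phi(\zeta)=\max_{i}\sup_x\|\nabla c_i(x,\zeta)\|$ one has $e^{t\phi(\zeta)}\le \sum_i e^{t\sup_x\|\nabla c_i(x,\zeta)\|}$ for $t\ge0$, and each summand has finite expectation for $|t|\le T$ by the second line of \eqref{eq:sgest} (combined, as above, with the uniform-over-$x$ control of Assumption~\ref{as:fullconsa}, part~6). Hence $M_\phi(t)=\mathbb{E}[e^{t\phi(\zeta)}]$ is finite for $t$ in a neighborhood of the origin, as claimed. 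The negative-$t$ side is trivial since $\phi\ge0$ gives $e^{t\phi(\zeta)}\le 1$.

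The only genuinely delicate point — and the one I would flag as the main obstacle — is measurability and finiteness of the supremum $\sup_{x\in\mathbb{R}^n}\|\nabla c(x,\zeta)\|$ itself: a priori this supremum could be $+\infty$ for some $\zeta$, or fail to be measurable, and \eqref{eq:sgest} only bounds moments at each fixed $x$, not the moment of the supremum. The paper sidesteps this by positing Assumption~\ref{as:fullconsa}, part~6, whose left-hand side already presupposes $\max_{i,x}\|\nabla c_i(x,\zeta_j)\|$ is a well-defined (measurable, a.s.\ finite) random variable with finite mean $\bar\phi$. So in the write-up I would make explicit that $\phi(\zeta):=\max_{i}\sup_{x\in\mathbb{R}^n}\|\nabla c_i(x,\zeta)\|$ is taken to be this random variable, cite Assumption~\ref{as:fullconsa}(6) for $\Phi=\mathbb{E}[\phi(\zeta)]\le m\bar\phi<\infty$, and derive the finite MGF from the exponential-moment hypotheses; everything else is the routine Cauchy--Schwarz / Jensen bookkeeping sketched above and need not be spelled out in detail.
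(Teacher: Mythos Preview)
Your proposal is correct and follows essentially the same line as the paper: both take $\phi(\zeta)=\sup_{x}\|\nabla c(x,\zeta)\|$, obtain the Lipschitz bound by Cauchy--Schwarz, and read off integrability and the finite MGF from the standing exponential-moment hypotheses in~\eqref{eq:sgest}. The only minor difference is how the supremum over $x$ is controlled: the paper simply appeals to the boundedness of $\{x^\nu\}$ (Assumption~\ref{as:bound}), whereas you instead invoke Assumption~\ref{as:fullconsa}(6); either route closes the argument.
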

\begin{proof}
This is clear by letting $\phi(\zeta)=\sup_{x\in\mathbb{R}^n}\|\nabla c(x,\zeta)\|$, the boundedness of $\{x^\nu\}$, and~\ref{eq:sgest}.
\end{proof}

\begin{lemma}\label{lem:exponentialconv}
There exists $\alpha_0$ and $c_\epsilon$ such that for sufficiently large $N$, it holds that,
\[
\mathbb{P}\left(\left\|d_{N}-d_*\right\|\ge \epsilon\right) \le c_\epsilon e^{-\alpha_0 2^N \beta(\epsilon)} 
\]
and,
\[
\mathbb{P}\left(\left\|\mu_{N}-\mu_*\right\|\ge \epsilon\right) \le c_\epsilon e^{-\alpha_0 2^N \beta(\epsilon)} 
\]
where $\beta(\epsilon)=\beta_0 \epsilon^2$ as $\epsilon\to 0$ and $\beta_0>0$. All constants are uniform
across major iterations $\nu$. 
\end{lemma}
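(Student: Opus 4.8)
The plan is to establish the exponential convergence of the SAA solution $d_N$ (and its multiplier $\mu_N$) to the deterministic solution $d_*$ by going through a chain of intermediate problems: from $(d_N,\mu_N)$ to $(d^c_N,\mu^c_N)$ (replacing the sampled objective gradient by the true one), then from $(d^c_N,\mu^c_N)$ to $(d^N_*,\mu^N_*)$ (replacing the sampled constraint data by the true expectations while keeping the sampled $\kappa^N$), and finally from $(d^N_*,\mu^N_*)$ to $(d_*,\mu_*)$ (replacing $\kappa^N$ by $\kappa_*$). Along each link I would bound the probability that the relevant perturbation in the problem data exceeds a threshold $\epsilon$ using the sub-Gaussian/exponential-moment hypotheses of Assumption~\ref{as:fullconsa}: part 1 controls the objective-gradient sampling error, part 4 controls $\nabla c(x,\zeta)^T d - \nabla C(x)^T d$ uniformly over the compact feasible set, part 5 controls the deviation $\psi^N(d;x)-\psi(d;x)$ that feeds into $\kappa^N-\kappa_*$, and part 6 gives an exponential bound on the empirical average of $\|\nabla c_i(x,\zeta_j)\|$, which lets us control the constraint Jacobian uniformly. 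A standard covering-number argument over the $\beta$-ball $\beta\mathbb{B}^n_\infty$ upgrades the pointwise exponential bounds to uniform ones, at the cost of an $N$-independent polynomial factor absorbed into $c_\epsilon$; this yields $\mathbb{P}(\|\kappa^N-\kappa_*\|\ge\epsilon)\le c_\epsilon e^{-\alpha_0 2^N\beta(\epsilon)}$ and analogous bounds for the objective and constraint data.

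Given these data-perturbation bounds, the second ingredient is a deterministic stability (Lipschitz-type) estimate for the solution of the strongly convex QP~\eqref{eq:subpfull} as a function of its data. Because the objective is $\tau$-strongly convex in $d$ and, by Assumption~\ref{as:fullconsa} parts 2 and 3, LICQ and strict complementarity hold at $d_*$, the KKT system is strongly regular in the sense of Robinson; hence there is a neighborhood of the nominal data on which the primal-dual solution map $(b,A,\kappa)\mapsto(d,\mu)$ is single-valued and Lipschitz. Restricting to the event (of probability at least $1-c_\epsilon e^{-\alpha_0 2^N\beta(\epsilon)}$) on which all three data perturbations are below a small fixed radius, I would chain the Lipschitz bound through the three intermediate problems to get $\|d_N-d_*\|+\|\mu_N-\mu_*\|\le L(\|b^N-\nabla F\| + \|A^N-\nabla C\|_{\mathrm{op}} + |\kappa^N-\kappa_*|)$ for a data-independent constant $L$; intersecting with the events where each term is $\le\epsilon/(3L)$ and union-bounding gives exactly the claimed $c_\epsilon e^{-\alpha_0 2^N\beta(\epsilon)}$ form, with $\beta(\epsilon)=\beta_0\epsilon^2$ coming from the Gaussian-type rate in the Cramér bounds as $\epsilon\to 0$. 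Uniformity across $\nu$ follows because Assumption~\ref{as:bound} confines $x^\nu$ to a compact set and every constant in Assumption~\ref{as:fullconsa} is posited to be $x$-independent, so $\alpha_0$, $\beta_0$, $c_\epsilon$, and $L$ may all be taken uniform.

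The main obstacle I anticipate is the link involving $\kappa^N$: it is itself defined through an inner minimization (the $\min_d\{\max_i(\cdots)_+ : \|d\|_\infty\le\rho\}$ term), so controlling $\kappa^N-\kappa_*$ requires a stability result for \emph{that} auxiliary min-max problem under the same kind of data perturbation, and then one must check that the outer QP remains well-posed (feasible, with the correct active set) after perturbing its right-hand side by $\kappa^N-\kappa_*$. The Ghost-penalty construction guarantees $\mathcal{F}^N$ and $\mathcal{F}^c$ are always nonempty, which removes feasibility worries, but one still needs a Lipschitz bound for the value function $\kappa(\cdot)$ in its data — this should follow from the same strong-convexity-plus-compactness structure (the $\max_i(\cdot)_+$ objective is convex and the feasible set is a fixed box), giving Lipschitz dependence of the optimal value on the coefficients, combined with Assumption~\ref{as:fullconsa} parts 5 and 6. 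A secondary technical point is ensuring the covering argument does not introduce an $N$-dependent constant in the exponent; since the feasible box has fixed radius $\beta$ and the Lipschitz moduli in $d$ have uniformly bounded moment generating functions (Lemma~\ref{lem:intgradd} and Assumption~\ref{as:fullconsa} part 4), the number of balls is a fixed function of $\epsilon$ and the net effect is only a multiplicative $c_\epsilon$, so the stated rate $e^{-\alpha_0 2^N\beta(\epsilon)}$ is preserved.
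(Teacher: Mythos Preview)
Your proposal is correct and follows essentially the same architecture as the paper's proof: the same chain $(d_N,\mu_N)\to(d^c_N,\mu^c_N)\to(d^N_*,\mu^N_*)\to(d_*,\mu_*)$, Cram\'er/sub-Gaussian tail bounds on each data perturbation (the paper invokes \cite{xu2010uniform} Theorems~3.1 and~4.1 where you propose covering arguments directly, which is exactly what those theorems package), strong stability of the strongly convex QP under LICQ and strict complementarity (the paper cites \cite[Theorem~4.51]{bonnans2013perturbation}, equivalent to your Robinson strong regularity), a separate treatment of $\kappa^N-\kappa_*$ via stability of the inner min--max, and a final union bound. The one substantive implementation difference is the link from $(d^c_N,\mu^c_N)$ to $(d^N_*,\mu^N_*)$: the paper does not directly bound the constraint-data perturbation and apply Lipschitz stability as you propose, but instead sandwiches the sampled feasible set $\mathcal{F}^c$ between two deterministic sets $\mathcal{F}^+_{\epsilon}\subseteq\mathcal{F}^c\subseteq\mathcal{F}^-_{\epsilon}$ via \cite[Proposition~2]{wang2008sample} and then applies strong stability twice to $d^\pm_\epsilon$. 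Both routes are valid; yours is a touch more direct, while the paper's sandwich localizes the constraint randomness into a single scalar right-hand-side perturbation, which fits cleanly with the perturbation theorem already being used for the $\kappa$ link.
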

\begin{proof}
Let $\bar\epsilon>0$. 

First we claim that for any $\hat \epsilon$ we can find $\epsilon_\kappa(\hat \epsilon)$, locally quadratically
varying with $\hat \epsilon$, such that 
$\mathbb{P}(\|\kappa_*-\kappa_N\|\ge \hat\epsilon)\le c_\kappa e^{-2^N\epsilon_\kappa(\hat \epsilon)}$. Note that
the claim resembles~\cite[Theorem 4.1]{xu2010uniform} and thus we prove it accordingly. 

To begin with we claim that $\hat\kappa(d,\zeta) = \max\limits_{i} \{c_i(x,\zeta)+\nabla c_i(x,\zeta)^T d\}$ is calm with respect to $d$ with a
modulus depending on $\zeta$ uniform across $x$. Indeed $|\kappa(d,\zeta)-\kappa(d',\zeta)|\le \max_i \|\nabla c_i(x,\zeta)\| \|d-d'\|\le H(\zeta)\|d-d'\|$
for measureable $H(\zeta)$ by the continuous differentiability of $c(\cdot,\zeta)$ and the compactness of $\{x^\nu\}$, and Lemma~\ref{lem:intgradd}. 
We can now combine, 1) 
Assumption~\ref{as:fullconsa} Part 6 2) the fact that from~\eqref{eq:sgest} it holds that the moment 
generating function, $\mathbb{E}\left[e^{t\max_{i,x\in\mathbb{R}^n}\left\|\nabla c_i(x,\zeta_j)\right\|}\right]$ is finite valued for
$t$ close to zero, and, finally, that 3) the boundedness of the $\infty$ norm constraint, condition~\eqref{eq:sgest} Lemma~\ref{lem:intgradd}, 
and the independence (with respect to each other)
of the samples $\{\zeta_i\}$ to say that $\lim\limits_{N\to\infty}\mathbb{E}\left[e^{t(\kappa_N-\kappa_*)}\right]$ exists and is 
finite for $t$ close to zero (see Assumption 3.1 and the subsequent statement in~\cite{xu2010uniform}),
to assert that we have satisfied the conditions for~\cite[Theorem 3.1]{xu2010uniform}
for $\kappa(d,\zeta)$. The final result comes from subsequently applying~\cite[Theorem 4.1]{xu2010uniform} whose
proof simply combines~\cite[Theorem 3.1]{xu2010uniform} and~\cite[Lemma 4.1]{xu2010uniform} (which clearly applies to the
optimization problem associated with $\kappa_N$ and $\kappa_*$). Thus the claim has been proven. 

Now, by strong stability of the primal-dual solution pair 
of a regular strongly convex optimization problem~\cite[Theorem 4.51]{bonnans2013perturbation}, it holds that
for sufficiently small $\hat\epsilon$, $\|\kappa_*-\kappa_N\|\le \hat\epsilon$ implies that $(d_*^N,\mu_*^N)$ exists and 
$\|d_*-d_*^N\|+\|\mu_*-\mu_*^N\|\le C\hat\epsilon$ for some $C>0$. Let $\hat\epsilon$ be small enough such that $C\hat\epsilon \le \bar\epsilon/5$.

Again by strong stability (applying Assumption~\ref{as:fullconsa} part 2) we also know that for sufficiently small $\epsilon$, $(d^+_\epsilon,\mu^+_\epsilon)$ and 
$(d^-_\epsilon,\mu^-_\epsilon)$ exist
and  $\|d_*^N-d^+_\epsilon\|+\|\mu_*^N-\mu^+_\epsilon\|\le \bar\epsilon/5$ and $\|d_*^N-d^-_\epsilon\|+\|\mu_*^N-\mu^-_\epsilon\|\le \bar\epsilon/5$. 

But then, since the assumptions of the Proposition are satisfied by Assumption~\ref{as:fullconsa} Part 5, Lemma~\ref{lem:intgradd} and~\eqref{eq:sgest},
we can apply~\cite[Proposition 2]{wang2008sample} to conclude that,
\[
\mathbb{P}\left\{\mathcal{F}^+_{\epsilon_f}\subseteq \mathcal{F}^c\subseteq \mathcal{F}^-_{\epsilon_f}\right\} \ge 1-B
e^{-\frac{2^N\epsilon_f^2}{8\sqrt{M}}}
\]
where $B$ is a constant that can depend on $n$ (i.e., it also depends on $\beta$ but we can drop
this dependence since the conditions of the result still hold for the problem without it, and this would only affect the constant to the effect of making it advantegeously smaller).

Thus, we can make $\epsilon_f$ as small as necessary in order that,
by the same perturbation arguments (again, requiring Assumption~\ref{as:fullconsa} part 2), 
$\left\{\mathcal{F}^+_{\epsilon_f}\subseteq \mathcal{F}^c\subseteq \mathcal{F}^-_{\epsilon_f}\right\}$ 
implies that $(d^-_{\epsilon_f},\mu^-_{\epsilon_f})$ and $(d^+_{\epsilon_f},\mu^+_{\epsilon_f})$ exist and 
$\|d^c_N-d^-_{\epsilon_f}\|+\|\mu^c_N-\mu^-_{\epsilon_f}\|\le \bar\epsilon/5$ and also
$\|d^c_N-d^+_{\epsilon_f}\|+\|\mu^c_N-\mu^+_{\epsilon_f}\|\le \bar\epsilon/5$, with $\bar{\epsilon}=C_f\epsilon_f$ for some $C_f>0$.

Finally, we claim that we can use the arguments in~\cite[Lemma 2]{blanchet2019unbiased} to bound
$\mathbb{P}(\|d_{N}-d^c_N\|\ge \bar{\epsilon}/5)$ and $\mathbb{P}(\|\mu_{N}-\mu^c_N\|\ge \bar{\epsilon}/5)$, i.e.,
by,
\[
\mathbb{P}\left\{\|d_{N}-d^c_N\|\ge \epsilon/5\right\}+\mathbb{P}(\|\mu_{N}-\mu^c_N\|\ge \bar{\epsilon}/5) \le c_\epsilon e^{-2^N \alpha(\epsilon)}
\]
where $\alpha(\epsilon)$ is locally quadratic about the origin. Indeed Assumption~\ref{as:fullconsa}, part 1, 
implies the conclusion of~\cite[Lemma 3.1]{xu2010uniform}, i.e., that,
\[
\mathbb{P}\left[ \left|\frac{1}{2^N}\sum_{j=1}^{2^N}\nabla f(x,\zeta_j)^T d+\frac{\tau}{2}\|d\|^2-\nabla F(x)^T d+\frac{\tau}{2}\|d\|^2\right|\ge \alpha\right\} \le c(\alpha)e^{-2^N \beta(\alpha)}
\]
for all $d\in F(\{\zeta\},x)$,
with $\beta(\alpha)$ growing locally quadratically with $\alpha$. Note that this corresponds to the conclusion of~\cite[Theorem 3.1iii]{xu2010uniform} 
for the problem associated with $d^c$ perturbed to $d^c_N$. Now, applying
strong stability~\cite[Theorem 4.51]{bonnans2013perturbation} instead of~\cite[Lemma 4.1]{xu2010uniform} 
yields the conclusions of~\cite[Theorem 4.1]{xu2010uniform} for both $d_{N}-d^c_N$ and $\mu_{N}-\mu^c_N$.

Thus we have that, if $\bar{\epsilon}$ is small enough,
\[
\begin{array}{l}
\mathbb{P}\left\{\|d_{N}-d_*\|\ge \bar\epsilon\right\}\le 
\mathbb{P}\left\{\|d_{N}-d^c_N\|\ge \bar{\epsilon}/5\right\}+
\mathbb{P}\left\{\|d^c_{N}-d^-_{\epsilon_f}\|\ge \bar{\epsilon}/5 \text{ OR }\|d^c_{N}-d^+_{\epsilon_f}\|\ge \bar{\epsilon}/5\right\}\\ \qquad\qquad\qquad 
\mathbb{P}\left\{\|d^N_{*}-d^-_{\epsilon_f}\|\ge \bar{\epsilon}/5 \text{ OR }\|d^N_{*}-d^+_{\epsilon_f}\|\ge \bar{\epsilon}/5\right\}+
 \mathbb{P}\left\{\|d_*-d^N_*\|\ge \bar{\epsilon}/5\right\} \\ \qquad\le
c_\epsilon e^{-N \alpha(\bar\epsilon)}
+c_\kappa e^{-N\epsilon_\kappa(\hat \epsilon)}+2B
e^{-\frac{2^N{\epsilon_f}^2}{8\sqrt{M}}}
\end{array}
\]
and likewise for $\mathbb{P}\left\{\|\mu_{N}-\mu_*\|\ge \bar\epsilon\right\}$.
Finally, it is clear from the constructions that $\epsilon_\kappa(\hat\epsilon)$ and $\alpha(\bar\epsilon)$ depend quadratically on $\bar\epsilon$
locally around zero.

\end{proof}

\begin{theorem}
The quantity $\tilde d(x)$ defined in~\eqref{eq:estd} satisfies $\mathbb{E}[\tilde d(x)]=d(x)$ and 
$\mathbb{E}[\|\tilde d(x)\|^2]<\sigma$ for some $\sigma$ independent of $x$.
\end{theorem}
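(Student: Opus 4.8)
The plan is to prove this as the stochastic--constraint counterpart of \cite[Theorem 3]{blanchet2019unbiased} (the deterministic--constraint version recorded just above), the engine being the randomized multilevel / antithetic identity underlying the estimator \eqref{eq:estd}. Two things must be shown: (i) a telescoping argument giving exact unbiasedness, and (ii) a summable bound on the per--level second moments giving finite --- and, crucially, $x$--uniform --- variance. The new ingredient relative to the deterministic case is Lemma~\ref{lem:exponentialconv}, which supplies the exponential--in--$2^N$ concentration of the SAA solution $d_N = d(x;S(2^N)/2^N)$ and multiplier $\mu_N$ around $(d_*,\mu_*)=(d(x),\mu(x))$; everything else is bookkeeping to keep the constants independent of $x$, for which I lean on Assumption~\ref{as:bound}, the uniform moment bounds \eqref{eq:sgest}, and the fact that Assumption~\ref{as:fullconsa} and Lemma~\ref{lem:exponentialconv} are stated with $x$--uniform constants.

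For unbiasedness, I would first condition on the geometric index $N$. Because, given $N=n$, the $2^{n+1}$ samples used to form $\Delta_n$ are drawn fresh (independently of $N$), $\mathbb{E}[\Delta_N/p(N)]=\sum_{n\ge 0}p(n)\,\mathbb{E}[\Delta_n]/p(n)=\sum_{n\ge 0}\mathbb{E}[\Delta_n]$. Since $S_O(2^n)$ and $S_E(2^n)$ are each $2^n$ i.i.d.\ copies of $X_\nu$, $\mathbb{E}\big[\tfrac12 d(x;S_O(2^n)/2^n)+\tfrac12 d(x;S_E(2^n)/2^n)\big]=\mathbb{E}[d(x;\bar X_{2^n})]$, so $\mathbb{E}[\Delta_n]=\mathbb{E}[d(x;\bar X_{2^{n+1}})]-\mathbb{E}[d(x;\bar X_{2^n})]$ telescopes and $\sum_{n=0}^{M}\mathbb{E}[\Delta_n]=\mathbb{E}[d(x;\bar X_{2^{M+1}})]-\mathbb{E}[d(x;X_\nu)]$, using $\bar X_{2^0}=X_\nu$. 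Letting $M\to\infty$: Lemma~\ref{lem:exponentialconv} together with Borel--Cantelli gives $d(x;\bar X_{2^n})\to d(x)$ a.s., and since $\|d(x;\bar X_{2^n})\|_\infty\le\beta$ for every $n$ by the box constraint in \eqref{eq:subp1b}, bounded convergence yields $\mathbb{E}[d(x;\bar X_{2^n})]\to d(x)$. Hence $\mathbb{E}[\tilde d(x)]=\mathbb{E}[d(x;X_\nu)]+\big(d(x)-\mathbb{E}[d(x;X_\nu)]\big)=d(x)$; the interchange of sum and expectation is legitimate because $\sum_n\mathbb{E}\|\Delta_n\|\le\sum_n(\mathbb{E}\|\Delta_n\|^2)^{1/2}<\infty$ by the bound established next.

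For the second moment, write $\mathbb{E}\|\tilde d(x)\|^2\le 2\mathbb{E}\|d(x;X_\nu)\|^2+2\,\mathbb{E}[\|\Delta_N\|^2/p(N)^2]$; the first term is at most $2n\beta^2$ by the box constraint, and $\mathbb{E}[\|\Delta_N\|^2/p(N)^2]=\sum_{n\ge0}\mathbb{E}\|\Delta_n\|^2/p(n)$, so it suffices to show $\mathbb{E}\|\Delta_n\|^2$ decays faster than $p(n)$. I would split on the event $G_n$ that $\bar X_{2^n}^{(1)},\bar X_{2^n}^{(2)},\bar X_{2^{n+1}}$ all lie in a fixed small ball about $\mathbb{E}[X]$ and the active sets of the three SAA subproblems coincide with that of the limiting subproblem at $d_*$. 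On $G_n^{\,c}$, Lemma~\ref{lem:exponentialconv} (for the active--set part, via strict complementarity) together with the exponential concentration of $\bar X_{2^n}$ from \eqref{eq:sgest} and Assumption~\ref{as:fullconsa}(6) give $\mathbb{P}(G_n^{\,c})\le C e^{-c2^n}$, while $\|\Delta_n\|_\infty\le 2\beta$ deterministically, so $\sum_n\mathbb{E}[\|\Delta_n\|^2\mathbf{1}_{G_n^{\,c}}]/p(n)\le\sum_n 4n\beta^2 C e^{-c2^n}/p(n)<\infty$ since $e^{-c2^n}$ dominates any geometric decay. On $G_n$, the LICQ and strict complementarity in Assumption~\ref{as:fullconsa}(2)--(3) --- applied both to the subproblems defining $\kappa$ and to the outer subproblem --- make $d(x;\cdot)$ a $C^{1,1}$ function of the empirical mean on that ball (affine in the averaged objective gradient, smooth in the averaged constraint data through $\kappa$), with a Lipschitz modulus for its derivative uniform in $x$ by Assumption~\ref{as:bound} and compactness. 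Since the samples are coupled so that $\bar X_{2^{n+1}}=\tfrac12(\bar X_{2^n}^{(1)}+\bar X_{2^n}^{(2)})$, a mean--value expansion of $\Delta_n=\tfrac12[d(x;\bar X_{2^{n+1}})-d(x;\bar X_{2^n}^{(1)})]+\tfrac12[d(x;\bar X_{2^{n+1}})-d(x;\bar X_{2^n}^{(2)})]$ has its first--order terms cancel, leaving $\|\Delta_n\|\le L\big(\|\bar X_{2^{n+1}}-\bar X_{2^n}^{(1)}\|^2+\|\bar X_{2^{n+1}}-\bar X_{2^n}^{(2)}\|^2\big)\le L'\|\bar X_{2^n}^{(1)}-\bar X_{2^n}^{(2)}\|^2$, so $\mathbb{E}[\|\Delta_n\|^2\mathbf{1}_{G_n}]\le L'^2\,\mathbb{E}\|\bar X_{2^n}^{(1)}-\bar X_{2^n}^{(2)}\|^4=O(2^{-2n})$ by the finite fourth moments implied by \eqref{eq:sgest}. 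Hence $\sum_n\mathbb{E}\|\Delta_n\|^2/p(n)<\infty$ as soon as the geometric parameter $p_\alpha$ in \eqref{eq:estd} satisfies $4(1-p_\alpha)>1$, and the resulting bound $\sigma$ depends only on $\beta$, the constants in \eqref{eq:sgest}, Assumption~\ref{as:fullconsa}, and Lemma~\ref{lem:exponentialconv}, none of which involves $x$.

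The main obstacle is this good--event analysis: showing that, with overwhelming probability, the solution map is $C^{1,1}$ in the empirical data with an $x$--uniform constant --- despite $\kappa$ being defined by a nested $\max$/$\min$--$\max$ --- and simultaneously quantifying the excluded active--set--switching event. This is precisely where Assumption~\ref{as:fullconsa}(2)--(3) and Lemma~\ref{lem:exponentialconv} are indispensable, and where the argument departs from \cite{blanchet2019unbiased}, whose $r$ is smooth by hypothesis: here smoothness of $r=d(x;\cdot)$ must itself be derived, locally on $G_n$, from parametric--optimization / strong--stability arguments in the spirit of \cite{bonnans2013perturbation}. A secondary point to check carefully is the legitimacy of the Fubini/dominated--convergence interchanges in the unbiasedness step, which is why the second--moment bound is established first.
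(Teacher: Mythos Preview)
Your unbiasedness argument is essentially the paper's: telescope $\sum_n\mathbb{E}[\Delta_n]$ to $\lim_N\mathbb{E}[d_{2^N}]-\mathbb{E}[d(x;X_\nu)]$ and pass to the limit via the uniform box bound $\|d\|_\infty\le\beta$ (the paper phrases this as uniform integrability of $\{d_{2^N}\}$; you use bounded convergence, which is the same thing here).

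For the variance bound the two routes genuinely diverge. The paper does \emph{not} argue $C^{1,1}$ parametric sensitivity of $d(x;\cdot)$ on a good event and then exploit the antithetic coupling to kill the first-order term. Instead it stays at the level of the KKT system: it rewrites the SAA optimality conditions as a perturbation $(\bar\eta^{(1)}_{2^N},\bar\eta^{(2)}_{2^N})$ of the limiting KKT system, invokes the $\delta$-method for large deviations~\cite{gao2011delta} to transfer the LDP of Lemma~\ref{lem:exponentialconv} from $(d_{2^N},\mu_{2^N})$ to $(\bar\eta_{2^N},\aleph_{2^N})$, and then expands the antithetic difference $\Delta_N$ directly in the KKT residuals, bounding the resulting sample-average terms by a CLT rate (via~\cite{von1965convergence}) together with a moderate-deviations splitting of $\|\mu_{2^N}-\mu_*\|$. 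Your approach is closer to the original Blanchet--Glynn template and is more explicit about where the constraint $4(1-p_\alpha)>1$ on the geometric parameter arises; the paper's route sidesteps the point you correctly flag as delicate --- establishing $C^{1,1}$ regularity of $d(x;\cdot)$ through the nested $\max/\min$-$\max$ defining $\kappa$ --- at the price of a less self-contained argument that leans on SAA asymptotic normality and an external LDP contraction principle. Both arrive at $\mathbb{E}\|\Delta_n\|^2=O(2^{-(1+\alpha)n})$ with $x$-uniform constants, so either is adequate for Assumption~\ref{as:dtilde}.
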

\begin{proof}
From~\cite[Section 5.2.2]{shapiro2009lectures}, in the discussion immediately prior to Section 5.3, it was shown that,
\[
N^{1/2}\begin{pmatrix} d_N-d_* \\ \mu_N-\mu_*\end{pmatrix} \to \mathcal{N}(0,J^{-1} \Gamma J)
\]
in distribution, where,
\[
\Gamma = \begin{pmatrix} \Sigma_f & \Sigma_{fc} \\ \Sigma_{fc}^T & \Sigma_c \end{pmatrix},\text{ and, }J=\begin{pmatrix} H & A^T \\ A & 0\end{pmatrix}
\]
where,
\[
\begin{array}{l}
\nabla L(d_*,\mu_*;x,\xi) = \nabla f(x,\xi)+\tau d_*-\nabla c_a(x,\xi)^T [\mu_*]_a - \nabla F(x)-\tau d_*+[\nabla C(x)]_a^T \mu_* \\
\Psi(d_*,\mu_*;x,\xi) = c_a(x,\xi)-C_a(x)-\kappa(x,\xi) e_a+\nabla c_a(x,\xi)^T d_*-\nabla C_a(x)^T d_*-\kappa_* e_a \\
\Sigma_f=\mathbb{E}\left[(\nabla L(d_*,\mu_*;x,\xi))
(\nabla L(d_*,\mu_*;x,\xi))^T\right], \\ 
\Sigma_c=\mathbb{E}\left[(\Psi(d_*,\mu_*;x,\xi))(\Psi(d_*,\mu_*;x,\xi))^T\right], \text{ and,} \\
\Sigma_{fc} = \mathbb{E}\left[(\nabla L(d_*,\mu_*;x,\xi))
(\Psi(d_*,\mu_*;x,\xi))^T\right]
\end{array}
\]
where the subscript $a$ denotes that only the constraints active at $d_*$ are considered and $e_a$ is the $a$-dimensional vector
of ones.
The nonsingularity of these matrices are given by the Assumptions. Finally the Hessian matrix is defined as $H=\tau I_n$ and
$A$ is the set of gradients associated with the active set at $d^*$, including the active bounds corresponding to the constraint
$\|x+d\|_\infty\le \beta$.

Since the feasible region is closed and bounded uniformly due to the presence of the constraint
$\|d\|\le \infty$, it holds that $\{d_{2^N}:N\ge 0\}$ is uniformly integrable. 
Since $d_{2^n}\to d_*$ it holds that,
\[
\mathbb{E}[\tilde d(x)] = \sum_{n=1}^\infty \mathbb{E}[\Delta_n]+\mathbb{E}[\beta_1] = \lim_{N\to\infty} \mathbb{E}[d_{2^N}] = \beta_*
\]
and thus the estimate is unbiased.

Now we must study the variance term and attempt to show $\mathbb{E}[\tilde d\tilde d^T]=O(2^{-(1+\alpha)N})$ for some $\alpha>0$. To this
end we follow the proof of~\cite[Theorem 3]{blanchet2019unbiased}, first by noting that Lemma~\ref{lem:exponentialconv} provides
a moderate deviations estimate for $d_{2^N}$ and $\mu_{2^N}$ and seek to derive one for $\aleph_{2^N}$. 
We similarly introduce the perturbed optimization problem with parameter $\eta=(\eta^{(1)},\eta^{(2)})$,
\[
\begin{array}{l}
\nabla F(x) +\tau d(\eta)+\nabla C(x) \mu(\eta)+\aleph(\eta)=-\eta^{(1)},\\
C(x)+\nabla C(x)^T d(\eta)\le \kappa(x) e-\eta^{(2)},\\
 \|d(\eta)\|_\infty\le \beta(x),\\
\mu(\eta)^T (\kappa(x)e-C(x)-\nabla C(x)^T d(\eta))=0,\\
\aleph(\eta)(\beta-\|d(\eta)\|_\infty) = 0,\\
\mu(\eta),\aleph(\eta)\ge 0,
\end{array}
\]
with the primal-dual solution $d(\eta),\mu(\eta),\aleph(\eta)$ being continuously differentiable functions of $\eta$ locally close to
$\eta=0$ by standard sensitivity
results for strongly regular functions and Assumption~\ref{as:fullconsa}, part 3 (i.e., with no active set changes).

Consider the optimality conditions of the SAA problem,
\[
0=\frac{1}{2^N}\sum_{i=1}^{2^N} \left[\left(\nabla f(x,\xi_i) +\tau d_{2^N}\right)+\nabla c(x,\xi_i) \mu_{2^N}+\aleph_{2^N}\right]
\] 
If we define,
\begin{equation}\label{eq:largeid1}
\bar{\eta}^{(1)}_{2^N} = \frac{1}{2^N} \sum_{i=1}^{2^N} \left[\nabla f(x,\xi_i)+\nabla c(x,\xi_i) \mu_{2^N}
-\nabla F(x)-\nabla C(x) \mu_{2^N}\right]
\end{equation}
we get that,
\begin{equation}\label{eq:largeid2}
\nabla F(x) +\tau d_{2^N}+\nabla C(x) \mu_{2^N}+\aleph_{2^N}=-\bar{\eta}^{(1)}_{2^N}
\end{equation}
Similarly, for, 
\[
\frac{1}{2^N}\sum_{i=1}^{2^N} \left[c(x,\xi_i)+\nabla c(x,\xi_i)^T d_{2^N}-\kappa^N e \right] \le 0
\]
if we set.
\begin{equation}\label{eq:largeid3}
\bar{\eta}^{(2)}_{2^N} = \frac{1}{2^N} \sum_{i=1}^{2^N} \left[c(x,\xi_i)+\nabla c(x,\xi_i)^T d_{2^N}-\kappa^N e 
-C(x)-\nabla C(x)^T d_{2^N}+\kappa^* e \right]
\end{equation}
we get that,
\[
C(x)+\nabla C(x)^T d_{2^N}-\kappa^* e\le-\bar{\eta}^{(2)}_{2^N}
\]

From the system of equations~\eqref{eq:largeid1}~\eqref{eq:largeid2}, and~\eqref{eq:largeid3} we can construct a map $f_{2^N}$ such that,
$f_{2^N}(d_{2^N},\mu_{2^N})=(\bar{\eta}^{(1)}_{2^n},\bar{\eta}^{(2)}_{2^n},\aleph_{2^N})$ and subsequently
apply~\cite[Theorem 2.1]{gao2011delta} to conclude that by the previous Lemma, the Large Deviations Principle 
for $(d_{2^N},\mu_{2^N})$ implies an LDP for $(\bar{\eta}^{(1)}_{2^n},\bar{\eta}^{(2)}_{2^n},\aleph_{2^N})$.


Now,
\[
\begin{array}{l}
0=\frac{1}{2^N}\sum_{i=1}^{2^N} \left[\nabla f(x,\xi_i) +\tau d_*+\nabla c(x,\xi_i) \mu_*+\aleph_*\right] \\
\qquad\qquad +
\frac{1}{2^N}\sum_{i=1}^{2^N} \left[\nabla f(x,\xi_i) +\tau d_{2^n}+\nabla c(x,\xi_i) \mu_{2^n}+\aleph_{2^n}
-\nabla f(x,\xi_i) -\tau d_*-\nabla c(x,\xi_i) \mu_{*}-\aleph_{*}\right] \\
\qquad = \frac{1}{2^N}\sum_{i=1}^{2^N} \left[\nabla f(x,\xi_i) +\tau d_*+\nabla c(x,\xi_i) \mu_{*}+\aleph_{*}\right]
\\ \qquad\qquad\qquad+\tau(d_{2^N}-d_*)+\aleph_{2^N}-\aleph_{*}+\frac{1}{2^N}\sum_{i=1}^{2^N} \left[\nabla c(x,\xi_i) \mu_{2^N}-\nabla c(x,\xi_i) \mu_{*}
\right] \\ \qquad\qquad\qquad\qquad\qquad\qquad\qquad\qquad\qquad\qquad\Longrightarrow \\
\qquad\tau(d_{2^N}-d_*)+\aleph_{2^N}-\aleph_{*}+\nabla C(x) \left(\mu_{2^N}-\mu_{*}\right)
\\ \qquad= -\frac{1}{2^N}\sum_{i=1}^{2^N} \left[\nabla f(x,\xi_i) +\tau d_*+\nabla c(x,\xi_i) \mu_{*}+\aleph_{*}
\right. \\ \qquad\qquad\left.+ \nabla c(x,\xi_i) \left(\mu_{2^N}-\mu_{*}\right)-\nabla C(x)\left(\mu_{2^N}-\mu_{*}\right) \right]
\end{array}
\]
And thus,
\[
\begin{array}{l}
\tau(d_{2^{N+1}}-\frac 12(d^O_{2^N}+d^E_{2^N}))+(\aleph_{2^{N+1}}-\frac 12(\aleph^O_{2^N}+\aleph^E_{2^N}))+\nabla C(X)
(\mu_{2^{N+1}}-\frac 12(\mu^O_{2^N}+\mu^E_{2^N}))
 \\
\quad = -\frac{1}{2^{N+1}}\sum_{i=1}^{2^{N+1}} \left[\nabla f(x,\xi_i)+\nabla c(x,\xi_i) \mu_{*}\right] \\ \qquad
+\frac 12 \left(\frac{1}{2^{N}}\sum_{i=1}^{2^{N}} \left[\nabla f(x,\xi^O_i)+\nabla c(x,\xi^O_i) \mu_{*}\right]
+\frac{1}{2^{N}}\sum_{i=1}^{2^{N}} \left[\nabla f(x,\xi^E_i)+\nabla c(x,\xi^E_i) \mu_{*}\right]\right)
\\
\qquad -\left(\frac{1}{2^{N+1}}\sum_{i=1}^{2^{N+1}} (\nabla c(x,\xi_i)-\nabla C(x)) (\mu_{2^{N+1}}-\mu_*)\right. \\ \qquad \left.-\frac 12 \left(\frac{1}{2^{N}}\sum_{i=1}^{2^{N}} (\nabla c(x,\xi^O_i)-\nabla C(x))( \mu^O_{2^{N}}-\mu_*)\right.\right.\\ \qquad\qquad\left.\left.
+\frac{1}{2^{N}}\sum_{i=1}^{2^{N}} (\nabla c(x,\xi^E_i)-\nabla C(x))( \mu^E_{2^{N}}-\mu_*)\right)\right) 
\end{array}
\]

Using the central limit theorem, i.e., via~\cite{von1965convergence}, we can see that the first three terms have
their $\mathbb{E}[\|\cdot\|]$ of the order of $2^{-2^N}$, and the last three as well from,
\[
\mathbb{E}[\|\mu_{2^{N}}-\mu_*\|] \le \mathbb{E}[\|\mu_{2^{N}}-\mu_*\|I(\|\mu_{2^{N}}-\mu_*\|\ge 2^{-\rho 2^N})]
+\mathbb{E}[\|\|\mu_{2^{N}}-\mu_*\|I(\|\mu_{2^{N}}-\mu_*\| <2^{-\rho 2^N})]
\]
for some $\rho\in(0,1)$ and applying the previous Lemma.

From this it is clear that the right hand side has a covariance that is $O(2^{-\rho 2^N})$, which finally implies that 
$\mathbb{E}[\bar \Delta_{N}\bar\Delta_N^T]=O(2^{-\rho 2^N})$.
\end{proof}

\section{Numerical Results}
For these experiments we consider training \texttt{MNIST} with various protocols. We use a neural network with one hidden layer
and sigmoid activations, where the sigmoid function is defined as,
\[
\sigma(x) = \frac{1}{1+e^{-x}}
\] 

We used the following parameters for our experiments:
\[
\beta=10, \, \rho=0.8,\, \lambda=0.5,\,\tau=1
\]
We considered two stepsize rules,
\[
\gamma^\nu = \frac{1}{1+\nu},\,\text{and } \gamma^{\nu+1} = \gamma^{\nu}(1-\zeta\gamma^{\nu})
\]
as they both satisfy the stepsize summability requirements~\eqref{eq:stepsizereq}. We found that
the incremental $\zeta=0.001$ was a good hyperparameter choice. 

Recall that the MNIST data set has inputs of dimension 784, and outputs a class label among the ten possible digits. In order to perform experiments with reasonable computational load, while also testing examples in a smaller quantity on larger problems, we considered capping the input dimension to one of $\{100,400, 784\}$ and the hidden layer to have $\{50,100,200,400\}$ neurons. 

We considered taking the geometric random variable parameter to be 
\[
N\sim \mathcal{G}\{0.05,0.1,0.2,0.4,0.7,0.9\}
\] 

For each set of parameters considered, we ran 21 trials and plotted the median and the inter-quartile range (the $25\%$ and $75\%$ quartiles) surrounding the median for both the objective and constraints. Given the large optimization parameter dimension, we used OSQP~\cite{osqp}, a splitting based first order QP solver, to solve the subproblems.

\subsection{Deterministic Convex Constraints}
In this case the objective is defined to be, for a simple sample of an input vector $\theta$ and label $y$, i.e., $\xi=(\theta,y)$ with
weights $W_i$ and biases $b_i$, i.e., $x=(W_1,W_2,b_1,b_2)$,
\[
F(x,\xi) = \frac{1}{2}\left(W^T_2\sigma(W^T_1 \theta+b_1)+b_2 -y\right)
\]

We consider training  with an ellipsoidal constraint on the parameters, introducing a constraint of the form,
\[
\frac{\|\mathbf{W}\|^2}{a_w^2}+\frac{\|\mathbf{b}\|^2}{a_b^2} \le c
\]
where we allow for a different balance on the weights and biases, i.e., $a_w\neq a_b$. Specifically, we used 
$a_w=2$ and $a_b=1$ and $c=5$.

We present representative results in Figure~\ref{fig:ellconfig}. Here $p_N=0.9$ and we consider the loss for predicting digit label $2$. It can be seen that the stochastic gradient converges quickly to minimal loss and the constraint infeasibility decreases more noisily but reliably to the threshold value.
\begin{figure}\label{fig:ellconfig}
\includegraphics[scale=0.5]{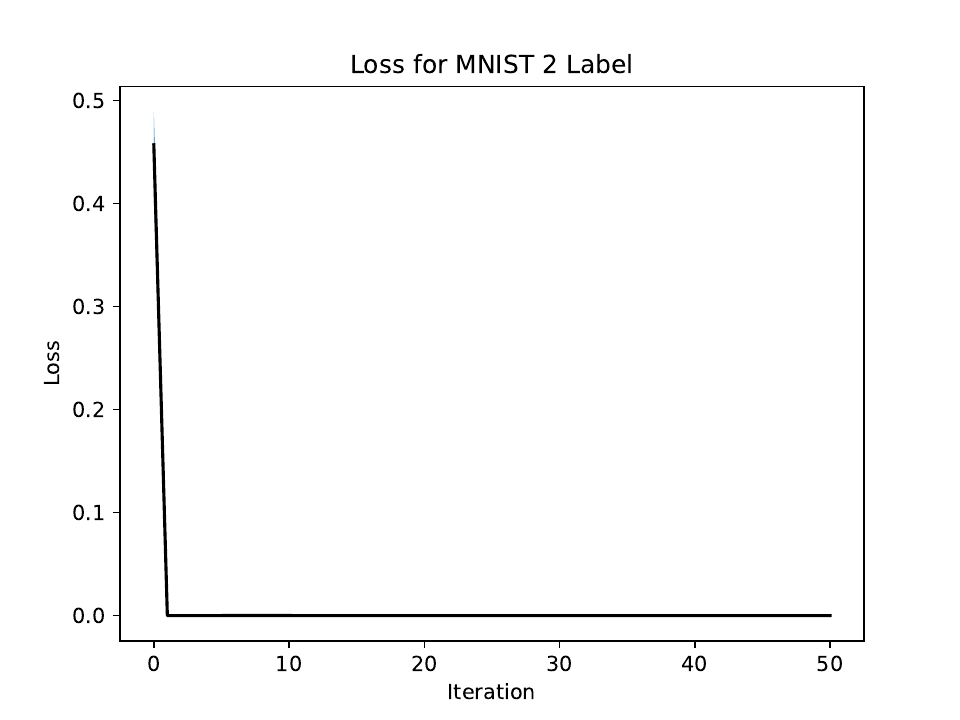}
\includegraphics[scale=0.5]{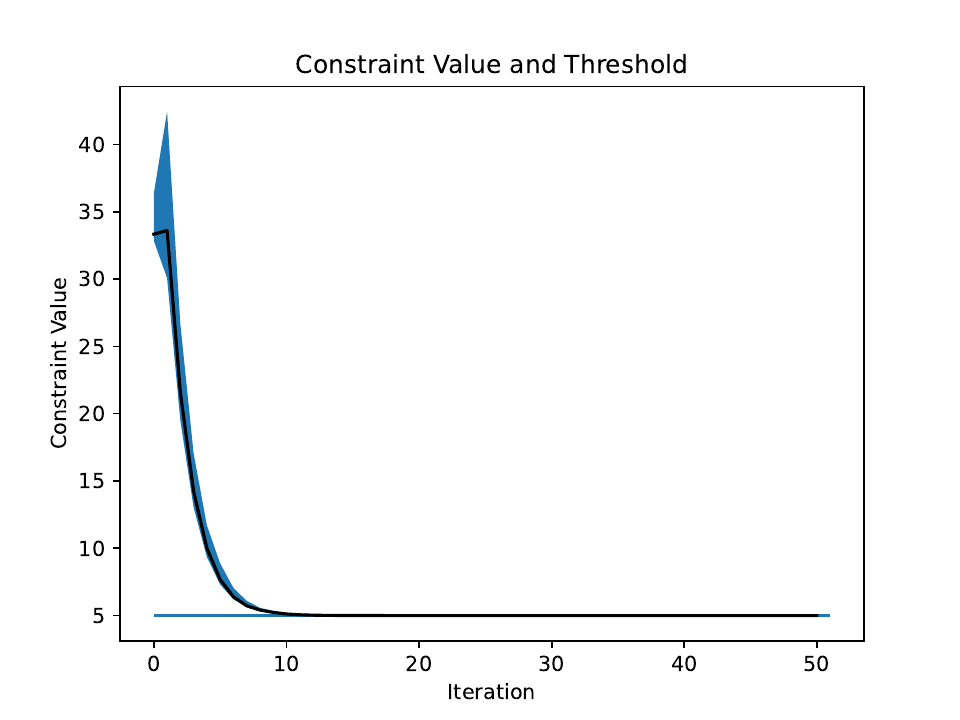}
\caption{Representative results for MNIST training loss with an ellipsoidal constraint on the norms of the weights.}
\end{figure}

\subsection{Stochastic Constraints - Training and Validation}
Here we consider building a model that minimizes the loss for identifying one digit on one segment of the data,
while at the same time enforcing \emph{validation error} of a certain bound on a separate segment of the data. In this case,
we use the same loss function, i.e., the same functional form for $F(x,\cdot)$ and $C(x,\cdot)$, however, with a distinct
set of samples. Formally,
\[
F(x,\xi) = \frac{1}{2}\left(W^T_{2}\sigma(W^T_1 \theta+b_1)+b_{2} -y\right),\,C(x,\zeta) = \frac{1}{2}\left(W^T_{2}\sigma(W^T_1 \theta'+b_1)+b_{2} -y'\right)
\]
A representative figure is shown in Figure~\ref{fig:trainvalid}, where now we plot the loss in log scale. In this case the training and validation are coherent, there is no overfitting and the constraint value is trained below its threshold.
\begin{figure}\label{fig:trainvalid}
\includegraphics[scale=0.5]{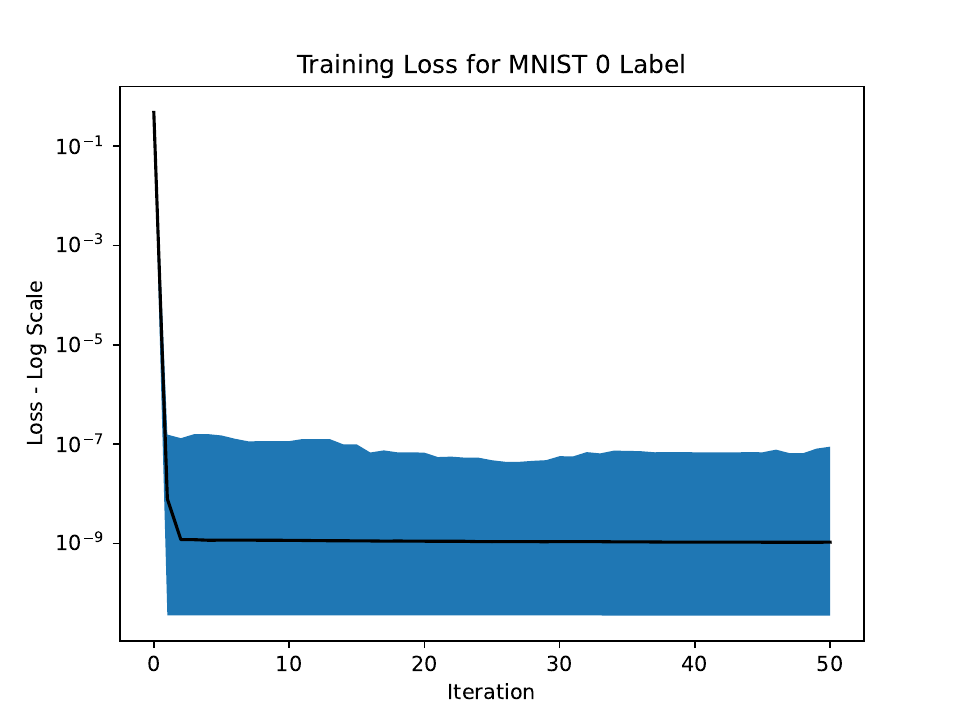}
\includegraphics[scale=0.5]{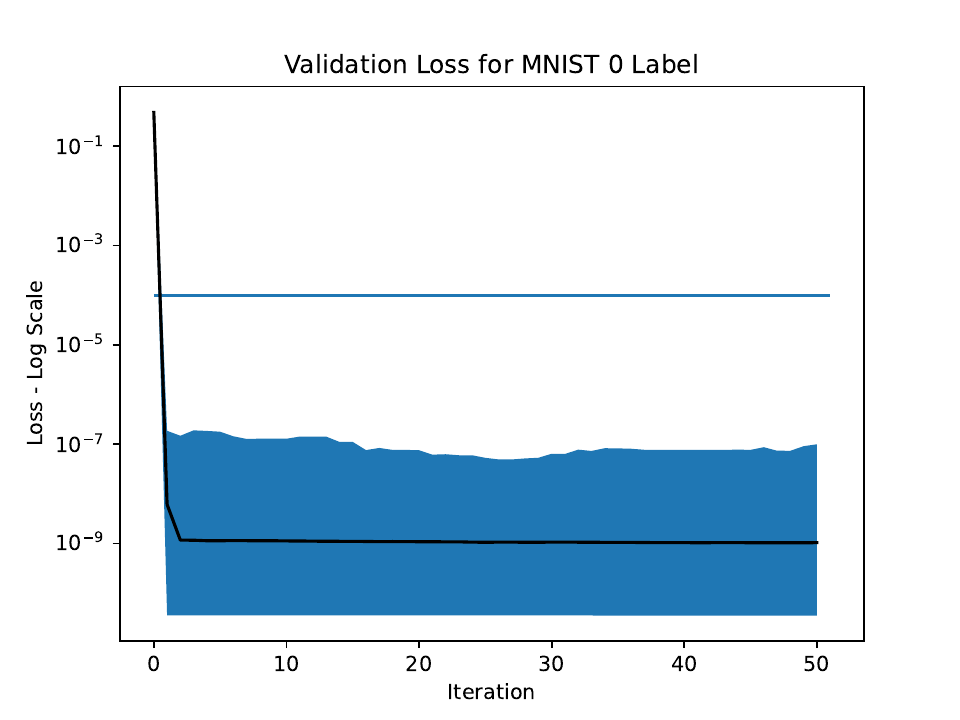}
\caption{Representative results for MNIST training with an objective to minimize a validation loss subject to a constraint on a validation loss (i.e., on separate data samples).}
\end{figure}

\subsection{Stochastic Constraints - Training Constraint}
Here we consider building a model that minimizes the loss for classifying one digit, while at the same time
exhibiting classifying a different digit to some threshold training loss. In this case, the neural network
will have common parameters for the input to hidden layer, in effect learning more general features, 
while having separate parameters from the hidden to the output layer. 

Formally, given input and label,
\[
F(x,\xi) = \frac{1}{2}\left(W^T_{2,1}\sigma(W^T_1 \theta+b_1)+b_{2,1} -y\right),\,C(x,\xi) = \frac{1}{2}\left(W^T_{2,2}\sigma(W^T_1 \theta+b_1)+b_{2,2} -y\right)
\]
noting that the stochastic space, the samples, are the same, but the weights and biases of the two output activations are distinct. 

Figure~\ref{fig:trainobjcons} shows a representative case, now with $p_N=0.1$. The objective is the training loss for the recognition of digit $5$, while the constraint is evaluating the training loss for the recognition of digit $4$. Note that the constraint with noise exhibits a robustness mechanism wherein the training goes beyond enforcing the specific constraint value.
\begin{figure}\label{fig:trainobjcons}
\includegraphics[scale=0.5]{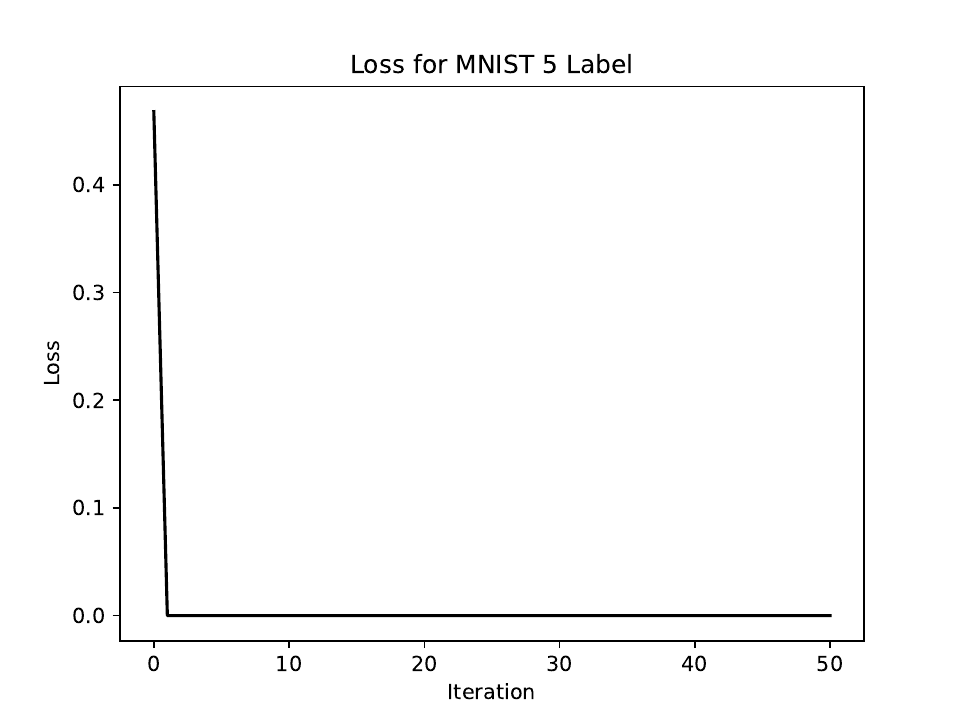}
\includegraphics[scale=0.5]{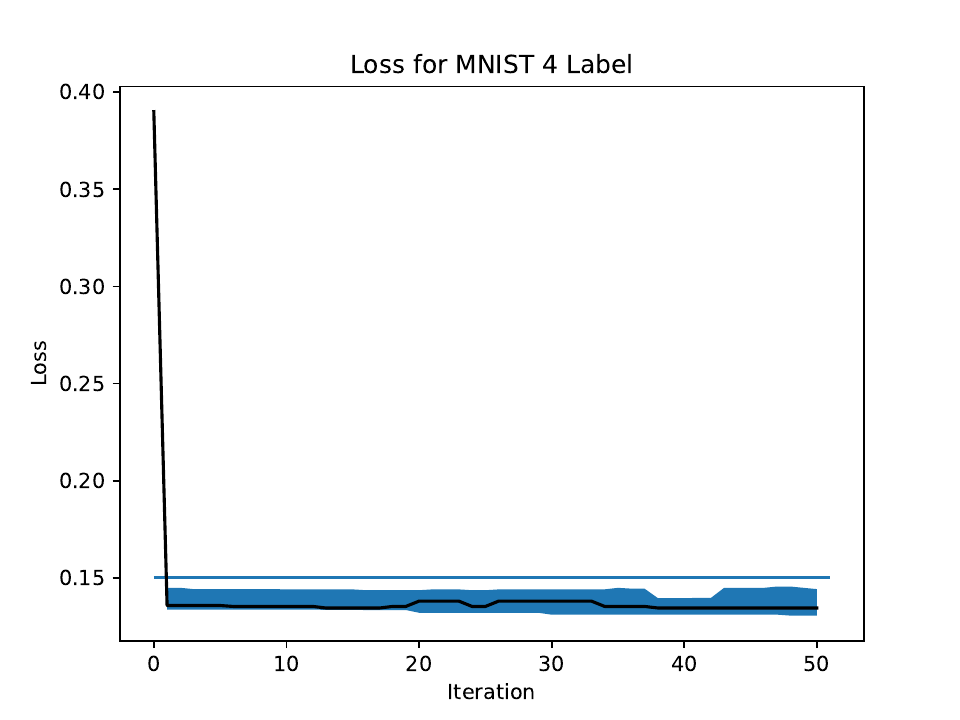}
\caption{Representative results for MNIST training with an objective to minimize one loss and a constraint of the loss for a different label.}
\end{figure}

\section{Conclusion}
This paper presents the first, to the best of the authors' knowledge, work on stochastic approximation for the challenging problem of solving an optimization problem with noise in the evaluation of both the objective and constraint function. We intend for this to be an inspiration for additional algorithms for this growingly important class of problems, as both modified Algorithms, alternative (possibly biased, with the bias controlled) estimators, and other statistical criteria (risk measures, chance constraints, etc.) are studied.

\bibliographystyle{plain}
\addcontentsline{toc}{chapter}{Bibliography}
\bibliography{refs}

\begin{thebibliography}{10}

\bibitem{akhtar2020conservative}
Zeeshan Akhtar, Amrit~Singh Bedi, and Ketan Rajawat.
\newblock Conservative stochastic optimization with expectation constraints.
\newblock {\em arXiv preprint arXiv:2008.05758}, 2020.

\bibitem{bastin2006convergence}
Fabian Bastin, Cinzia Cirillo, and Philippe~L Toint.
\newblock Convergence theory for nonconvex stochastic programming with an
  application to mixed logit.
\newblock {\em Mathematical Programming}, 108(2-3):207--234, 2006.

\bibitem{berahas2021sequential}
Albert~S Berahas, Frank~E Curtis, Daniel Robinson, and Baoyu Zhou.
\newblock Sequential quadratic optimization for nonlinear equality constrained
  stochastic optimization.
\newblock {\em SIAM Journal on Optimization}, 31(2):1352--1379, 2021.

\bibitem{blanchet2015unbiased}
Jose~H Blanchet and Peter~W Glynn.
\newblock Unbiased monte carlo for optimization and functions of expectations
  via multi-level randomization.
\newblock In {\em 2015 Winter Simulation Conference (WSC)}, pages 3656--3667.
  IEEE, 2015.

\bibitem{blanchet2019unbiased}
Jose~H Blanchet, Peter~W Glynn, and Yanan Pei.
\newblock Unbiased multilevel monte carlo: Stochastic optimization,
  steady-state simulation, quantiles, and other applications.
\newblock {\em arXiv preprint arXiv:1904.09929}, 2019.

\bibitem{bonnans2013perturbation}
J~Fr{\'e}d{\'e}ric Bonnans and Alexander Shapiro.
\newblock {\em Perturbation analysis of optimization problems}.
\newblock Springer Science \& Business Media, 2013.

\bibitem{boob2022stochastic}
Digvijay Boob, Qi~Deng, and Guanghui Lan.
\newblock Stochastic first-order methods for convex and nonconvex functional
  constrained optimization.
\newblock {\em Mathematical Programming}, pages 1--65, 2022.

\bibitem{borkar2009stochastic}
Vivek~S Borkar.
\newblock {\em Stochastic approximation: a dynamical systems viewpoint},
  volume~48.
\newblock Springer, 2009.

\bibitem{burke1989robust}
James~V Burke and Shih-Ping Han.
\newblock A robust sequential quadratic programming method.
\newblock {\em Mathematical Programming}, 43(1-3):277--303, 1989.

\bibitem{ermoliev2013sample}
Yuri~M Ermoliev and Vladimir~I Norkin.
\newblock Sample average approximation method for compound stochastic
  optimization problems.
\newblock {\em SIAM Journal on Optimization}, 23(4):2231--2263, 2013.

\bibitem{facchinei2017ghost}
Francisco Facchinei, Vyacheslav Kungurtsev, Lorenzo Lampariello, and Gesualdo
  Scutari.
\newblock Ghost penalties in nonconvex constrained optimization: Diminishing
  stepsizes and iteration complexity.
\newblock {\em arXiv preprint arXiv:1709.03384}, 2017.

\bibitem{gao2011delta}
Fuqing Gao, Xingqiu Zhao, et~al.
\newblock Delta method in large deviations and moderate deviations for
  estimators.
\newblock {\em The annals of statistics}, 39(2):1211--1240, 2011.

\bibitem{kushner2003stochastic}
Harold Kushner and G~George Yin.
\newblock {\em Stochastic approximation and recursive algorithms and
  applications}, volume~35.
\newblock Springer Science \& Business Media, 2003.

\bibitem{mertikopoulos2020almost}
Panayotis Mertikopoulos, Nadav Hallak, Ali Kavis, and Volkan Cevher.
\newblock On the almost sure convergence of stochastic gradient descent in
  non-convex problems.
\newblock {\em arXiv preprint arXiv:2006.11144}, 2020.

\bibitem{Na2021}
Sen Na, Mihai Anitescu, and Mladen Kolar.
\newblock Inequality constrained stochastic nonlinear optimization via
  active-set sequential quadratic programming, 2021.

\bibitem{oliveira2017sample}
Roberto~I Oliveira and Philip Thompson.
\newblock Sample average approximation with heavier tails i: non-asymptotic
  bounds with weak assumptions and stochastic constraints.
\newblock {\em arXiv preprint arXiv:1705.00822}, 2017.

\bibitem{oliveira2017sampleb}
Roberto~I Oliveira and Philip Thompson.
\newblock Sample average approximation with heavier tails ii: localization in
  stochastic convex optimization and persistence results for the lasso.
\newblock {\em arXiv preprint arXiv:1711.04734}, 2017.

\bibitem{patrascu2017nonasymptotic}
Andrei Patrascu and Ion Necoara.
\newblock Nonasymptotic convergence of stochastic proximal point methods for
  constrained convex optimization.
\newblock {\em Journal of Machine Learning Research}, 18:198--1, 2017.

\bibitem{pflug2003stochastic}
G~Ch Pflug.
\newblock Stochastic optimization and statistical inference.
\newblock {\em Handbooks in operations research and management science},
  10:427--482, 2003.

\bibitem{pflug1981convergence}
Georg~Ch Pflug.
\newblock On the convergence of a penalty-type stochastic optimization
  procedure.
\newblock {\em Journal of Information and Optimization Sciences},
  2(3):249--258, 1981.

\bibitem{robbins1971convergence}
Herbert Robbins and David Siegmund.
\newblock A convergence theorem for non negative almost supermartingales and
  some applications.
\newblock In {\em Optimizing methods in statistics}, pages 233--257. Elsevier,
  1971.

\bibitem{shapiro2009lectures}
Alexander Shapiro, Darinka Dentcheva, and Andrzej Ruszczy{\'n}ski.
\newblock {\em Lectures on stochastic programming: modeling and theory}.
\newblock SIAM, 2009.

\bibitem{osqp}
B.~Stellato, G.~Banjac, P.~Goulart, A.~Bemporad, and S.~Boyd.
\newblock {OSQP}: an operator splitting solver for quadratic programs.
\newblock {\em Mathematical Programming Computation}, 12(4):637--672, 2020.

\bibitem{thomdapu2020stochastic}
Srujan~Teja Thomdapu, Ketan Rajawat, et~al.
\newblock Stochastic compositional gradient descent under compositional
  constraints.
\newblock {\em arXiv preprint arXiv:2012.09400}, 2020.

\bibitem{von1965convergence}
Bengt Von~Bahr et~al.
\newblock On the convergence of moments in the central limit theorem.
\newblock {\em The Annals of Mathematical Statistics}, 36(3):808--818, 1965.

\bibitem{wang2008sample}
Wei Wang and Shabbir Ahmed.
\newblock Sample average approximation of expected value constrained stochastic
  programs.
\newblock {\em Operations Research Letters}, 36(5):515--519, 2008.

\bibitem{xiao2019penalized}
Xiantao Xiao.
\newblock Penalized stochastic gradient methods for stochastic convex
  optimization with expectation constraints.
\newblock 2019.
\newblock Available on Optimization Online.

\bibitem{xu2010uniform}
Huifu Xu.
\newblock Uniform exponential convergence of sample average random functions
  under general sampling with applications in stochastic programming.
\newblock {\em Journal of Mathematical Analysis and Applications},
  368(2):692--710, 2010.

\end{thebibliography}

\end{document}